
\documentclass [12pt]{amsart}

\usepackage{amssymb,amsxtra,amsfonts}
\usepackage{epsf}              %
\usepackage{epsfig}             %
\usepackage{graphics}

\usepackage[colorlinks]{hyperref}

\openup0.08cm

\evensidemargin=0cm \oddsidemargin=0cm\topmargin=-1cm	
\parskip=6pt
\parindent=0.5cm   %

\textheight=9in    %
\textwidth=6in

\usepackage{verbatim}  
\usepackage{bbding}   





\newenvironment{ppb}[1]
{\ \!\!\!\!\!\!\!\!\!\!\!\!\!\!\!\!\!\!\!\!\!\!\!\!\!\!\!\!\!\!\!\!\!\!\!\!\!\!\!\! {\bf PPB-----------------------------------------------------------------------------PPB}\newline \tiny {#1}
\  \newline\normalsize\phantom{f}\!\!\!\!\!\!\!\!\!\!\!\!\!\!\!\!\!\!\!\!\!\!\!\!\!\!\!\!\!\!\!\!\!\!\!\!\!\!\!\! {\bf 
PPB-----------------------------------------------------------------------------PPB}\newline}{}


\long\def\pb #1*/{}

\def\reE@DeclareMathSymbol#1#2#3#4{%
    \let#1=\undefined
    \DeclareMathSymbol{#1}{#2}{#3}{#4}}
\DeclareSymbolFont{symbolsC}{U}{txsyc}{m}{n}
\SetSymbolFont{symbolsC}{bold}{U}{txsyc}{bx}{n}
\DeclareFontSubstitution{U}{txsyc}{m}{n}
\reE@DeclareMathSymbol{\strictiff}{\mathrel}{symbolsC}{76}

\newcommand\beq{\begin{equation}}
\newcommand\eeq{\end{equation}}
\newcommand\bal{\begin{align*}}
\newcommand\eal{\end{align*}}   
\newcommand\bmx{\left(\begin{matrix}}
\newcommand\emx{\end{matrix}\right)}
\newcommand\bsmx{\left(\begin{smallmatrix}}
\newcommand\esmx{\end{smallmatrix}\right)}

\newcommand{\bSi}{{\bf \Si}}

\newcommand{\onto}{\twoheadrightarrow}
\newcommand{\into}{\hookrightarrow}
\newcommand{\spq}{/\!\!/}

\providecommand{\spqa}[1]{\underset{#1}{/\!\!/}}

\newcommand{\st}{\ \bigl\vert\ }

\providecommand{\mult}{\text{\rm mult}}

\def\part#1{\frac{\partial\phantom{q}}{\partial#1}}

\newcommand {\flb}{\lbrack\!\lbrack}
\newcommand {\frb}{\rbrack\!\rbrack}
\newcommand {\flp}{(\!(}
\newcommand {\frp}{)\!)}

 

\newcommand{\glue}[1]{\underset{#1}{\strictiff}}
\newcommand{\fus}{\circledast}



\newcommand{\MB}{\mathcal{M}_{\text{\rm B}}}
\newcommand{\wMB}{\wt{\mathcal{M}}_{\text{\rm B}}}


\newcommand{\HH}{\text{\rm H}}

\newcommand{\Lie}{{\mathop{\rm Lie}}}


\DeclareMathOperator{\ISto}{{\IS}to} 
\DeclareMathOperator{\isto}{sto}     
\DeclareMathOperator{\sto}{sto}     

\DeclareMathOperator{\Sect}{\mathop{\rm Sect}}


\newcommand{\papk}[3]{\,_{#1}^{\phantom{#3}}\cA_{#2}^{#3}     }




\newcommand{\Ad}{{\mathop{\rm Ad}}}
\newcommand{\ad}{{\mathop{\rm ad}}}

\newcommand{\ram}{\mathop{\rm ram}}
\newcommand{\lev}{\mathop{\rm lev}}

\newcommand{\Map}{{\mathop{\rm Map}}}
\newcommand{\Prod}{\prod}

\DeclareMathOperator{\Perm}{Perm}   
   
\DeclareMathOperator{\Iso}{Iso}   
\DeclareMathOperator{\GrIso}{GrIso}   
\DeclareMathOperator{\GrAut}{GrAut}

\DeclareMathOperator{\Hom}{Hom}         
\DeclareMathOperator{\Aut}{\mathop{\rm Aut}}
\DeclareMathOperator{\THom}{THom}

\newcommand{\GL}{{\mathop{\rm GL}}}



\DeclareMathOperator{\mon}{mon}	

\newcommand{\Ker}{\mathop{\rm Ker}}

\DeclareMathOperator{\End}{End}


\newcommand{\hk}{{hyperk\"ahler }}   






\newcommand{\bD}{{\bf D}}

\newcommand{\bH}{{\bf H}}

\newcommand{\bQ}{{\bf Q}}
\newcommand{\bs}{{\bf S}}
\newcommand{\bS}{{\bf S}}


\DeclareSymbolFont{bbold}{U}{bbold}{m}{n}
\DeclareSymbolFontAlphabet{\mathbbold}{bbold}

\newcommand{\IA}{\mathbb{A}}

\newcommand{\IC}{\mathbb{C}}
\newcommand{\ID}{\mathbb{D}}

\newcommand{\IG}{\mathbb{G}}
\newcommand{\IH}{\mathbb{H}}

\newcommand{\IL}{\mathbb{L}}

\newcommand{\IN}{\mathbb{N}}

\newcommand{\IQ}{\mathbb{Q}}                           
                           
\newcommand{\IS}{\mathbb{S}}

\newcommand{\IT}{\mathbb{T}}

\newcommand{\IZ}{\mathbb{Z}}


\newcommand{\cA}{\mathcal{A}}

\newcommand{\cC}{\mathcal{C}}


\newcommand{\cF}{\mathcal{F}}
\newcommand{\cG}{\mathcal{G}}
\newcommand{\cH}{\mathcal{H}}
\newcommand{\ch}{\eta}     
\newcommand{\cI}{\mathcal{I}}


\newcommand{\cO}{\mathcal{O}}

\newcommand{\cP}{\mathcal{P}}

\newcommand{\cT}{\mathcal{T}}


\newcommand{\g}{       \mathfrak{g}     }

\newcommand{\lt}{\mathfrak{t}}
\newcommand{\lh}{\mathfrak{h}}

\newcommand{\ls}{\mathfrak{s}}




\newcommand{\wt}{\widetilde}

\newcommand{\wh}{\widehat}

\newcommand{\al}{\alpha}

\newcommand{\be}{\beta}
\newcommand{\ga}{\gamma}

\newcommand{\De}{\Delta}

\newcommand{\Ga}{\Gamma}

\newcommand{\la}{\lambda}

\newcommand{\si}{\sigma}

\newcommand{\Si}{\Sigma}
\renewcommand{\th}{\theta}

\newcommand{\ze}{\zeta}

\renewcommand{\bar}{\overline}








 
\makeatletter
 \newlength{\typesize}
 \setlength{\typesize}{10pt}
 \addtolength{\typesize}{\@ptsize pt}
\makeatother

\newlength{\vvoff}
\newlength{\hhoff}


\def\mapright#1{\smash{
        \mathop{\longrightarrow}\limits^{#1}}}

\def\mapdown#1{\Big\downarrow
        \rlap{$\vcenter{\hbox{$\scriptstyle#1$}}$}}

\def\underset#1#2{\ \smash{\mathop{ #2 }\limits_{#1}}\ }

\newcommand{\pf}{\begin{bpf}}

\newcommand{\pfms}{\begin{bpfms}}
\newcommand{\epf}{\end{bpf}\hfill$\square$\\}           
\newcommand{\epfms}{\end{bpfms}\hfill$\square$\\}       

\newcommand{\idea}{\begin{bidea}}

\newcommand{\eidea}{\end{bidea}\hfill$\square$\\}           

\newcommand{\sk}{\begin{bsk}}    

\newcommand{\esk}{\end{bsk}\hfill$\square$\\}           
\newcommand{\sketch}{\begin{bsketch}}

\newcommand{\esketch}{\end{bsketch}\hfill$\square$\\}





\newtheorem {hypo}{\bf\hspace{-\parindent}Hypothesis}
\newtheorem {thm}[hypo]{Theorem}   
\newtheorem {prop}[hypo]{Proposition}

\newtheorem {cor}[hypo]{Corollary}
\newtheorem {lem}[hypo]{Lemma}

\newtheorem {defn}[hypo]{Definition}

\theoremstyle{remark}\newtheorem{rmk}[hypo]{Remark}

\newenvironment{exercise}{{\bf\hspace{-\parindent}Exercise.}}{}
\newenvironment{newq}[1]{\newpage}{}

\begin{document}

\title{Twisted wild character varieties}
\author{Philip Boalch and Daisuke Yamakawa}%

\begin{abstract}
We will construct twisted versions of the wild character varieties. 
\end{abstract}

\maketitle

\section{Introduction}

This article extends the algebraic construction of the wild character varieties \cite{saqh, fission, gbs} 
to the case of 
``twisted'' Stokes local systems.
There are two, closely related, types of twist that appear, both already mentioned in \cite{gbs}.
Firstly we will consider Stokes data for connections with  twisted formal normal forms (often called the
``ramified case'').
A simple example where this type of twist occurs is the Airy equation, which was studied in Stokes' original article on the Stokes phenomenon \cite{stokes1857}. 
Here the ``twist'' can be understood visually from the following diagram, drawn by Stokes:

\begin{figure}[h]
	\resizebox{5cm}{!}{\includegraphics[width=5cm]{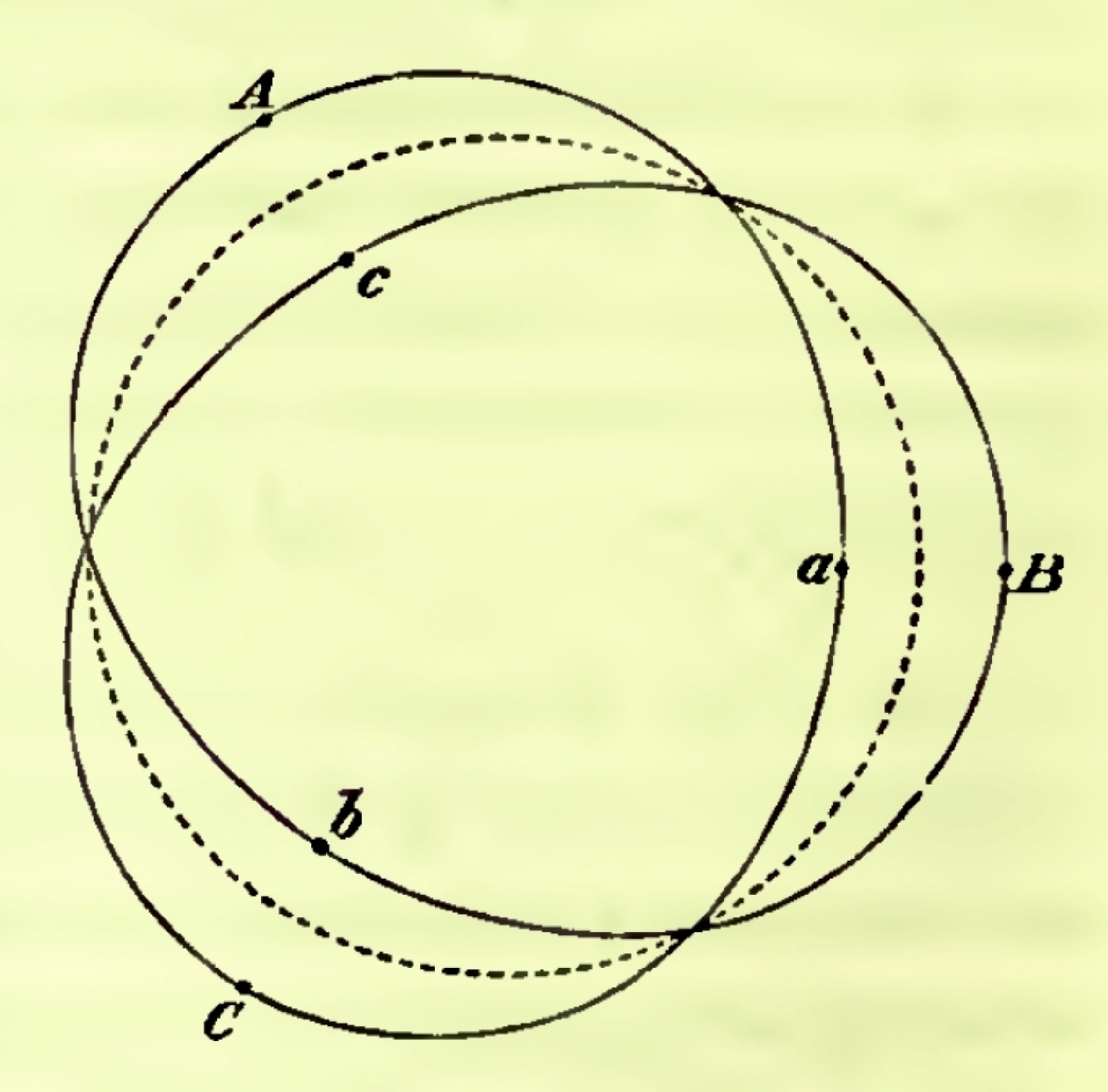}}
	\caption{The Stokes diagram of the Airy equation, from \cite{stokes1857} p.116.}\label{fig: stokesdiag} 
\end{figure}

Such twists can be recognised by the appearance of fractional powers of a local coordinate in the exponential factors of formal solutions, 
such as $\exp(\pm 2 x^{3/2})$  
in  %
the Airy equation $u''=9xu$ considered by Stokes (see \cite{stokes1857} eqn. (20)). 
The Stokes diagram represents the monodromy and the growth/decay of this exponential factor.

Secondly we will also consider local systems which are twisted in the interior of the curve---namely they are torsors for a local system of groups, rather than for a constant group.
Such examples may appear exotic, as for twisted loop groups, but in fact occur as the spaces of Stokes data used to classify 2d topological quantum field theories and the counting of BPS states \cite{cec-vafa-nequals2classn, Dub95long}.
These examples are partly responsible for the recent resurgence of interest in the Stokes phenomenon (cf. e.g. the introduction  to \cite{smid, smapg}).
They led to the discovery that the Poisson manifolds underlying the Drinfeld--Jimbo quantum groups are  simple examples of wild character varieties \cite{smapg}
and the subsequent realisation \cite{bafi} that the braiding of Stokes data
(the wild mapping class group) underlies Lusztig's symmetries (also known as Soibelman, Kirillov--Reshetikhin's quantum Weyl group).

Another motivation is that the wild character varieties
give the simplest description of the differentiable manifolds underlying a large class of complete \hk manifolds, the wild Hitchin spaces \cite{Hit-sde, wnabh}.
These are moduli spaces of solutions of Hitchin's self-duality equations; in one complex structure they are spaces of meromorphic Higgs bundles (and so algebraically completely integrable systems \cite{Hit-sbis, Bot, Mar}), 
and in another they are 
spaces of meromorphic connections 
(and so are spaces of initial conditions for the isomonodromy systems---see \cite{smid}).
By the irregular Riemann--Hilbert correspondence such spaces of connections become wild character varieties.
Thus there are three parallel classification problems that one could consider:

1) algebraic integrable systems,

2) isomonodromy systems,

3) wild character varieties.

If restricted to algebraic integrable systems that are isomorphic to a meromorphic Hitchin system, e.g. basically anything admitting a rational Lax pair, then these classifications are abstractly equivalent (via the wild versions of the non-abelian Hodge and Riemann--Hilbert correspondences on curves).
One point here is that many of the simplest 
algebraic integrable systems, such as the Mumford system (which is closely related to the
KdV hierarchy), are diffeomorphic to twisted wild character varieties in this way, as is the whole Painlev\'e 1 hierarchy of isomonodromy systems.
In subsequent work the present results will be used to extend the theory of Dynkin diagrams for the above classification problems (developed in \cite{rsode, slims, hi-ya-nslcase}) to the twisted case. They also yield a twisted version of the 
(coloured) multiplicative quiver varieties \cite{CB-Shaw, yamakawa-mpa, cmqv}.

A related motivation comes from mirror symmetry and Langlands duality. Namely via the above correspondences the wild character varieties inherit special Lagrangian torus fibrations 
(by \hk rotation), 
so can be studied in the SYZ framework of mirror symmetry.
By considering as-general-as-possible complex character varieties, we hope to get a complete picture 
which is closed under passing to the dual torus fibration. This duality is known to be related to Langlands duality in some cases \cite{ha-thad.mirror, DonPan12}, 
but  apparently suffers from 
``missing orbits'' in certain untwisted cases \cite{gukov-witten-rigid}.

\subsection{Summary of results}

We will first quickly review some known results.  
The quasi-Hamiltonian approach \cite{AMM} reveals the 
natural differential-geometric structure on moduli spaces of framed local systems on Riemann surfaces, that yields the natural Poisson/symplectic structures on the moduli spaces of local systems themselves, upon forgetting the framing.
Whilst first developed only for compact groups the theory may be complexified and then 
the complex character varieties may be constructed as multiplicative symplectic quotients of a smooth affine variety by a reductive group.
Suppose $\wh\Si$ is a compact Riemann surface with non-empty boundary $\partial$ and $G$ is a connected complex reductive group 
(with an invariant nondenegerate symmetric bilinear form on its Lie algebra).
Let $\be\subset \partial$ be a set of basepoints with exactly one element in each boundary circle $\partial_i\subset\partial$.
Then the space of $G$-local systems framed at $\be$ is isomorphic to the representation space
$$\Hom(\Pi,G)$$
where $\Pi=\Pi_1(\wh \Si,\be)$ is the fundamental groupoid.
The main result of \cite{AMM} basically says that this is a 
quasi-Hamiltonian space for the group $G^\be=\Map(\be,G)$.
This is a multiplicative analogue of the notion of a Hamiltonian space: there is an action of $G^\be$, an invariant two-form and a moment map taking values in the group $G^\be$. Here the action corresponds to changing the framings and the moment map is given by taking the local monodromies around the boundary circles $\rho \mapsto \{\rho(\partial_i)\}$.
An immediate corollary is that the character variety (the affine quotient)
$$\MB = \Hom(\Pi,G)/G^\be\cong
\Hom(\pi_1(\wh \Si),G)/G $$
inherits the structure of algebraic Poisson variety, with symplectic leaves obtained by fixing the conjugacy classes of the local monodromies.
Thus  the character varieties arise from  the smooth affine varieties 
$\Hom(\Pi,G)$ and their quasi-Hamiltonian structure.
In particular this gives a clean algebraic way to recover the Atiyah--Bott symplectic structure.

In \cite{saqh, fission, gbs} a similar picture was established for (untwisted) Stokes local systems, 
yielding many new examples of quasi-Hamiltonian spaces and 
an algebraic approach to the irregular extension \cite{smid} of the Atiyah--Bott symplectic structure.
Here the main new features are that the structure groups are naturally reduced to reductive subgroups 
$$H_i\subset G$$
in a halo around each boundary circle, and some extra, tangential, punctures are added to account for the Stokes multipliers.
The resulting space of Stokes representations
$$\Hom_\IS(\Pi,G)\subset \Hom(\Pi,G)$$
is again a smooth affine variety and Thm 1.1 of \cite{gbs} says that it is a quasi-Hamiltonian $\bH$-space where $\bH= \Prod H_i\subset G^\be$.
(Here $\Pi$ is defined after removing the tangential punctures from $\wh \Si$.)
This implies the wild character varieties $\Hom_\IS(\Pi,G)/\bH$ are Poisson with symplectic leaves obtained by fixing the conjugacy classes of the formal monodromies (in the groups $H_i$).
This work is surveyed in \cite{p12}. 
The possibility to break $G$ to $H_i$ is called fission, and leads to a bestiary of new symplectic varieties, beyond those of concern here \cite{fission}, \cite{gbs} \S3.2.

On the other hand, returning to the tame case, physically it doesn't seem quite natural to restrict to $G$-local systems (requiring some omnipotent being to transport a fixed copy of $G$ from one piece of the surface to another). Rather it is possible that as one moves around the surface the group $G$ becomes twisted by an automorphism.
Thus suppose $\cG\to \wh \Si$ is a local system of groups (with each fibre isomorphic to $G$).
It is determined by a morphism $f:\Pi\to \Aut(G)$.
Then we can consider $\cG$-local systems on $\wh \Si$.
Now however the monodromy around a loop is not an automorphism of a fibre but rather it is a twisted automorphism.
Given $\phi\in \Aut(G)$ let 
$$G(\phi) = \{(g,\phi)\st g\in G\}\cong G$$
be the corresponding ``twist'' of $G$.
It is a subset of the group $G\ltimes \Aut(G)$, which
acts on a standard fibre ($\cong G$) via $p\mapsto g\phi(p)$.
Given a path $\ga\in \Pi$ we get
$f(\ga)\in \Aut(G)$, and thus a twist of $G$:
$$G(\ga) := G(f(\ga))\subset G\ltimes \Aut(G).$$
We will sometimes (abusively) call such spaces 
$G(\phi)$
``twisted groups'', since the orbits of the conjugation action of $G\subset G\ltimes\Aut(G)$
on $G(\phi)$
are commonly called twisted conjugacy classes 
\cite{springer-twisted}.
The moduli space of framed $\cG$-local system is isomorphic to the  space of twisted representations
$$\THom(\Pi,G)\subset \Hom(\Pi,G\ltimes \Aut(G))$$
which is the subset where any 
path  $\ga$ is mapped to $G(\ga)$.
It is a smooth affine variety 
with an action of $G^\be$ changing the framings. 
Basic results \cite{AMW-DH, ABM-purespinors} then 
imply that 
$\THom(\Pi,G)$ 
is a {\em twisted} quasi-Hamiltonian $G^\be$-space---this notion is very close to that of a quasi-Hamiltonian space, the only difference in the axioms is that the moment map 
$\rho \mapsto \{\rho(\partial_i)\}$ now takes values in the twist $G^\be(\partial)$ of $G^\be$.
In turn this implies the resulting twisted character variety 
$$\MB(\wh\Si,\cG)=\THom(\Pi,G)/G^\be\cong \THom(\pi_1(\wh\Si),G)/G$$ 
is a Poisson variety with symplectic leaves obtained by fixing the twisted conjugacy classes of the local monodromies. 
The corresponding set-theoretic quotient is usually denoted 
$\HH^1(\wh \Si,\cG)$ or $\HH^1(\pi_1(\wh\Si),G)$, where $\pi_1$ acts on $G$ via $f$
(see \cite{frenkel-nabc, serre.galoiscoh}).

In this article we will put all this together (and extend to twisted formal types), 
by defining the notion
of twisted Stokes $\cG$-local systems and 
showing that: 

\begin{thm}\label{thm: thm1}
The moduli space $\THom_\IS(\Pi,G)$ of framed twisted Stokes $\cG$-local systems is a smooth affine variety and is a 
twisted quasi-Hamiltonian $\bH$-space.
\end{thm}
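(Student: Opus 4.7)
The strategy is to combine the wild/Stokes construction of \cite{gbs} with the twisted tame framework of \cite{AMW-DH, ABM-purespinors}. Concretely, I would decompose $\wh\Si$ into a tame outer piece (the complement of small halo discs around the irregular boundary circles) together with one local halo for each $\partial_i$, exhibit each of these as a (twisted) quasi-Hamiltonian space in its own right, and then assemble them by a twisted fusion operation, mimicking the proof of Thm.~1.1 of \cite{gbs} at each step.

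For the tame outer piece only the interior twist by $\cG$ is present and no Stokes data appear. The associated space of twisted framed representations is a twisted quasi-Hamiltonian $G^\be$-space whose boundary-circle moment map takes values in the appropriate twists $G(\partial_i)$; this is the surface-with-boundary generalisation of the results of \cite{AMW-DH, ABM-purespinors}, applied to the twisted groupoid $f\colon\Pi\to\Aut(G)$. For each halo I would build a local space $\cA_i$ of framed twisted Stokes data as a twisted analogue of the fission variety of \cite{fission, gbs}: tuples recording the formal monodromy, the Stokes multipliers across each Stokes sector, and the inner/outer framings, subject to the constraint that the formal monodromy lies in the twist $H_i(\phi_i)$ and the full boundary holonomy lies in $G(\phi_i)$. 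The main technical task is to verify that $\cA_i$ is a twisted quasi-Hamiltonian $G\times H_i$-space, with outer and inner moment maps recording the full and formal twisted monodromies respectively. This reduces to a twisted version of the key identity ``formal monodromy $=$ product of Stokes factors times inner monodromy'' used in \cite{gbs}, combined with the twisted two-form axioms of \cite{AMW-DH, ABM-purespinors}; once the compatibility of $\phi_i$ with the Levi decomposition and the Stokes unipotent subgroups is fixed, the verification proceeds in parallel with the untwisted case.

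With the tame piece and the local halos in hand as twisted quasi-Hamiltonian spaces, the theorem follows by fusing them along the boundary circles using the twisted fusion operation: the result is again twisted quasi-Hamiltonian, now for the group $\bH=\Prod H_i$. Smoothness and affineness are inherited, since each $\cA_i$ is cut out by twisted equations inside a product of copies of $G$ and $H_i$ and hence is smooth and affine, and fusion with the smooth affine tame piece preserves both properties. The principal obstacle is the local step, i.e.\ constructing $\cA_i$ and checking the twisted moment map and two-form identities in the presence of both Stokes filtrations and the formal-type twist; the global assembly is then a formal consequence of the twisted fusion lemma.
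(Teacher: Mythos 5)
Your overall decomposition (build local halo spaces $\cA_i$ for each irregular boundary circle, exhibit each as a twisted quasi-Hamiltonian $G\times H_i$-space with the two moment maps you describe, then fuse with genus contributions and quotient by $G$) matches the paper's strategy; the global assembly formula is indeed $\THom_\IS(\Pi,G)\cong\cA(Q_1)\fus\cdots\fus\cA(Q_m)\fus\ID_1\fus\cdots\fus\ID_g\spq G$, and smoothness/affineness of the halo pieces comes from an explicit isomorphism $\cA(Q)\cong G\times H(\partial)\times\prod_{d\in\IA}\ISto_d$.

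However, you have glossed over precisely the point where the real work sits. Declaring that once the compatibility of $\phi_i$ with the Levi and Stokes unipotents is "fixed, the verification proceeds in parallel with the untwisted case" hides two substantive ingredients of the paper's proof of Theorem \ref{thm: qh disc}. First, the paper does \emph{not} verify the quasi-Hamiltonian axioms directly for a general halo: it sets up an induction on the number of levels $k_1<\cdots<k_r$, via the nested local systems of tori $\IT^r\subset\cdots\subset\IT^1$ and their centralisers $\cH_1\subset\cdots\subset\cH_r$, reducing everything by gluing to the case of a single level (Prop.~\ref{prop: 1level}); this is a genuine structural step (requiring both kinds of twist to be treated simultaneously, as the paper emphasises), not a routine compatibility check. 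Second, even the one-level case involves a delicate non-degeneracy (QH3) argument: one extends the indexing of the Stokes groups by $\ISto_{is+j}=\psi^{-i}(\ISto_j)$ with $\psi=P\phi(\cdot)P^{-1}$, checks the direct-spanning/periodicity statements via the ramified-covering untwisting of Appendix~\ref{apx: untwisting}, and shows each $R_i=\Delta_i+\sigma_i'$ lands in $\lh\oplus\ls_i$ and then vanishes. The presence of $\phi$ changes the periodic structure of the Stokes group sequence (the period is $2l$ in the extended indexing, not simply $s$), so ``in parallel with the untwisted case'' undersells what must be re-proved. Without the level induction and the extended-index $R_i$ computation your plan leaves the central claim about $\cA_i$ unestablished.
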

In turn this implies the resulting twisted wild character variety 
$$\THom_\IS(\Pi,G)/\bH$$ 
is a Poisson variety with symplectic leaves obtained by fixing the twisted conjugacy classes of the formal monodromies. 

The basic picture is as in \cite{gbs, p12} except now we might have a twisted local system in the interior of the curve, as well as a reduction to a twisted local system on each halo (and we may have twists on each halo even if there is no twist in the interior, as for Airy's equation).
Note that on any proper sector (at any pole) the local picture is isomorphic to that in \cite{gbs} so the main 
input here is to see that when these are glued together the result is indeed 
a twisted quasi-Hamiltonian space.

As in \cite{AMM} we may use fusion to do induction with respect to the genus and number of poles. Then as in \cite{gbs} we may use fusion within the halos to do induction with respect to the depth of the connections (reducing the order of poles)---for this it is essential to consider both types of twist simultaneously.
Then Theorem \ref{thm: thm1} follows   
from the special case of a connection with just one pole and one level on a disc, which is amenable to a direct proof. 
In this case (in fact for any connection with just one pole on a disc) 
the formula for the twisted quasi-Hamiltonian 
two-form is the same as that in \cite{gbs} 
(which in turn generalises \cite{saqh, fission}).

The intrinsic description of the Stokes data that we give rests heavily on the work of
many people, most directly Deligne, Loday-Richaud, Malgrange, Martinet--Ramis 
(and in turn Babbitt--Varadarajan, Balser--Jurkat--Lutz, Ecalle, Sibuya ...).

The layout of this article is as follows. 
The first few sections are preparatory, leading up the definition of
 twisted Stokes $\cG$-local systems in \S\ref{sn: sls}.
Then \S\ref{sn: classn} gives the classification of framed 
Stokes local systems.
Next \S\ref{sn: tqH} discusses twisted quasi-Hamiltonian geometry and proves Thm 
\ref{thm: thm1}.
Some examples are discussed at the end.

\noindent{\bf Acknowledgments.}
The first named author is grateful to Jochen Heinloth for useful conversations, and 
is partly supported by grants
ANR-13-BS01-0001-01 (Vari\'et\'es de caract\`eres et g\'en\'eralisations) 
and ANR-13-IS01-0001-01/02 
(Sym\'etrie miroir et singularit\'es irr\'eguli\`eres provenant de la physique).
The second named author is supported by JSPS KAKENHI Grant Number 15K17552.

\section{Torsors etc}\label{ssn: torsors}
We will briefly recall some elementary facts and notation for torsors, 
and $\cG$-local systems.
Let $G$ be a group.
A $G$-torsor is a set $\cP$ with a free transitive (right) action of $G$.
For example if $V$ is an $n$-dimension complex vector space, then the space 
$\cP=\Iso(\IC^n,V)$ of frames of $V$ is a $\GL_n(\IC)$-torsor---a point of $\cP$ is an isomorphism $\Phi:\IC^n\to V$, giving a basis of $V$, and $g\in\GL_n(\IC)$
acts on the right $\Phi\mapsto \Phi\circ g$, changing the basis of $\IC^n$.

A morphism $\phi:\cP_1\to\cP_2$ between two $G$-torsors is a $G$ 
equivariant map, 
i.e. such that $\phi(pg)=\phi(p)g$ for all $p\in \cP_1, g\in G$.
Any such map is an isomorphism.
Let $\Iso(\cP_1,\cP_2)$ denote the set of such maps, and let
$\Aut(\cP)=\Iso(\cP,\cP)$. 
Thus $\Aut(\cP)$ is a group that acts (freely and transitively) 
on $\cP$ on the left, commuting with the right $G$ action.
In this situation we say that $\cP$ is a $\Aut(\cP)$-$G$-bitorsor.
Similarly $\Iso(\cP_1,\cP_2)$ is an $\Aut(\cP_2)$-$\Aut(\cP_1)$-bitorsor.
If  $\cP$ has commuting, free, transitive actions of $G$ on the left and the right
then we will say $\cP$ is a $G$-bitorsor.

Let $\cF$ denote the standard/trivial $G$-torsor: it is just a copy of $G$ with $G$ acting by right multiplication.
Then $\Aut(\cF)=G$, acting by left multiplication on $\cF$.
The semi-direct product $G\ltimes \Aut(G)$ acts on $\cF$ via
$$(g,\phi)(p) = g\phi(p).$$
This extends the action of the normal subgroup $G\subset G\ltimes \Aut(G)$ acting by automorphisms of $\cF$.
(Here $\Aut(G)$ is the group of group automorphisms of $G$.)
We can view $G\ltimes \Aut(G)$ as a subgroup of the group $\Perm(\cF)$ of all bijections
$\cF\to \cF$ in this way.
If $\phi\in \Aut(G)$ let $\Ga\subset \Aut(G)$ be the subgroup generated by 
$\phi$ and write $\wt G=\wt G(\phi)=G\ltimes \Ga\subset G\ltimes \Aut(G)$.
Let
$$G(\phi) = \{(g,\phi)\st g\in G\} \subset \wt G(\phi)\subset G\ltimes \Aut(G)$$
be the component of $\wt G$ lying over $\phi$.
Then the natural left and right actions of 
$G\subset G\ltimes \Aut(G)$ are free and transitive, so
$G(\phi)$ is $G$-bitorsor.
Explicitly $(g_1,g_2)\in G\times G$ sends $(g,\phi)$ to  $(g_1g\phi(g_2),\phi)$.
In particular the conjugation action of $G$ is $\phi$-conjugation,
whose orbits are twisted conjugacy classes 
(cf. e.g \cite{springer-twisted}).
We will say that a $G$-bitorsor, such as $G(\phi)$, is a ``twist of $G$'', or, abusively, a ``twisted group''.

A {\em framing} of a $G$-torsor $\cP$ is an isomorphism
$\th\in \Iso(\cF,\cP)$. %
The choice of $\th$ is equivalent to choosing a point $p \in \cP$ (given 
$p\in \cP$ there is a unique $\th=\th_p$ such that $p= \th(1)$).
In turn there is an induced group isomorphism 
$$\eta=\eta_p:\Aut(\cP) \to G=\Aut(\cF),$$
namely
$\eta(m)=\th^{-1} \circ m\circ\th:\cF\to \cF$, for all $m\in \Aut(\cP)$.
Note that 
\beq
m(p) = p\cdot \eta(m)
\eeq
for all 
$m\in \Aut(\cP)$ and this characterises $\eta$.

\subsection{$\cG$-local systems} 

Now suppose $\Si$ is a connected space (typically a circle or a Riemann surface here).
A local system $\IL$ over $\Si$ is a covering map $\pi :\IL\to \Si$. The fibres are discrete, but may be uncountable.
If $G$ is a group then a $G$-local system $\IL$ is a local system 
such that each fibre is a $G$-torsor.
Said differently it is a sheaf which is a torsor under the constant sheaf $G$
over $\Si$
(so $\IL(U)$ is $G$-torsor for small open $U\subset \Si$). 
One can specify a $G$-local system 
via constant clutching  maps $g\in G$
$$\cF \cong \cF; \qquad p\mapsto g \cdot p$$
on each two-fold intersection of an open cover.
More generally one can replace $G$ by a local system $\cG$ of groups.
On a cover, such  $\cG$ is specified  by constant $\Aut(G)$-valued clutching maps $\phi$
$$ G \cong G; \qquad h\mapsto \phi(h)$$
on each two-fold intersection.
Then a $\cG$-local system $\IL$ is a sheaf which is a torsor under $\cG$
(so $\IL(U)$ is $\cG(U)$-torsor for small open $U\subset \Si$). 
On a cover, such  $\IL$ can be specified by constant $G\ltimes\Aut(G)$-valued clutching maps 
$(g,\phi)$ acting as
$$ \cF \cong \cF; \qquad p\mapsto g\cdot \phi(p)$$
on each two-fold intersection, where the $\phi$ are the automorphisms determining $\cG$.
If so, the local system of groups $\Aut(\IL)$ is specified by the clutching maps
$$ G \cong G; \qquad h\mapsto g\phi(h)g^{-1}.$$

\newq{}

\section{Graded local systems}\label{sn: grloc}

This section discusses local systems that are graded by a local system of sets, rather than by a set.

\subsection{Covers}

Let $\partial$ 
denote a circle, and let $\pi:I\to \partial$ 
be a covering map, i.e. a local system of sets.
Thus for each point $p\in \partial$ we have a discrete set 
$I_p=\pi^{-1}(p)$, and as $p$ moves around the circle the sets are locally constant
so that after one turn we obtain a monodromy automorphism $\si:I_p\to I_p$.
Upto isomorphism the monodromy $\si$ determines $I$.
In the cases of interest to us $I$ is a disjoint union of circles.

\subsection{Graded local systems}

Now let $V\to \partial$ be a local system of finite dimensional complex vector spaces. 
We will usually think of local systems of vector spaces as locally constant sheaves 
(but the equivalent viewpoint of a $C^\infty$ vector bundle with a flat connection
is sometimes helpful).

\begin{defn}
An ``$I$-graded local system'' is a local system $V\to \partial$
together with a point-wise grading
\beq\label{eq: grading}
 V_p = \bigoplus_{i\in I_p} V_{p,i} 
\eeq
of each fibre of $V$, by the set $I_p$, such that
each subspace $V_{p,i}$ is locally constant over
 $\partial$, via the local system structure of $V$.
\end{defn}

Thus if $p\in \partial$ is a basepoint then we have monodromy automorphisms
$\si\in \Aut(I_p),$ and $\wh\si\in \GL(V_p)$. 
The grading \eqref{eq: grading} is locally constant, but globally one has
$$\wh\si(V_{p,i}) = V_{p,\si(i)}$$
i.e. the monodromy of $V$ permutes the subspaces according to the monodromy of $I$.
In particular, in general, 
each subspace $V_{p,i}$ is not the fibre of a sub-local system of $V$. 

If $V$ is an $I$-graded local system, then we can associate an integer, the multiplicity, to each component $j\subset I$ of $I$, namely the rank of the corresponding subspaces of $V$:
\beq\label{eq: multj}
\mult(j) = \dim(V_{p,i}) \ge 0
\eeq
for any $i\in j_p\subset I_p$ and 
$p\in \partial$. 

\newq{}
\subsection{Local system of exponents} \label{ssn: exp ls}

Suppose $\Si$ is a complex curve and $0\in \Si$ is a smooth point.
Let $\pi:\wh \Si\to \Si$ denote the real oriented blow up of $\Si$ at $0$, and let 
$\partial=\pi^{-1}(0)\subset \wh \Si$ denote the circle of real directions emanating from $0$ in $\Si$. Open intervals $U\subset \partial$  parameterise 
(germs of) open sectors $\Sect(U)$ at $0$ with opening $U$.

Choose a local coordinate $z$ vanishing at $0$.
Let $\cI$ be the local system on $\partial$ whose local sections are 
complex polynomials in some root  $z^{-1/r}$ of $z^{-1}$ 
having constant term zero.
In other words local sections are
functions (defined on sectors) of the form 
\beq\label{eq: locsn q}
q = \sum_{i=1}^k  a_i z^{-i/r}
\eeq
where $a_i\in \IC, k,r\in \IN$.
(They are just the exponents of the exponential factors occurring in formal solutions of linear differential equations.)
Each local section $q$ has a well defined
ramification degree/index: it is the minimal integer $r=\ram(q)\ge 1$ 
such that $q\in \IC[z^{-1/r}]$.
Thus each local section $q$ of $\cI$ becomes single valued on the $r$-fold cyclic covering circle of $\partial$, where $r=\ram(q)$.
The {\em degree} of $q$ is the smallest integer $k=\deg(q)\ge 0$ 
possible in \eqref{eq: locsn q}.
The {\em level} of $q$ is 
$\lev(q)=\deg(q)/\!\ram(q)\in \IQ$.
Taking the exterior derivative of sections of $\cI$ yields an isomorphic local system 
``\,$d\,\cI$\,'', the {\em Deligne local system} \cite{deligne78},
whose local sections are one-forms. 

We will often use the equivalence between local systems and covering spaces, and thus view $\cI$ as a (vast) disjoint union of circles covering $\partial$.
Thus each component $I\in \pi_0(\cI)$  is a circle covering $\partial$, and we will write 
$\pi:I\to \partial$ for this finite covering (sub-local system).
Let $r=\ram(I)$ denote the degree of $\pi$.
If we choose an interval $U\subset \partial$  then $\pi^{-1}(U)$ has $r$ components, and each component determines a well defined function on $\Sect(U)$. 
These functions are the $r$ branches of a single function upstairs 
on a small annulus with boundary circle $I\to \partial$
(i.e. they form a single Galois orbit).
In other words if $b\in U$ is a basepoint and we choose a point 
$i\in I_b=\pi^{-1}(b)$ then we get a well defined function $q_i$ on $\Sect(U)$, and the analytic continuation of $q_i$ yields the other functions $q_j$ for  $j\in I_b$.

Thus choosing a component of $\cI$ is equivalent to choosing a Galois orbit of such functions. Thus for example  the Galois orbit $\{z^{-1/2}, -z^{-1/2}\}$ determines a component $I\subset \cI$, and we will sometimes write  $I=\langle z^{-1/2} \rangle$, bearing in mind that we also view $I$ as the two-fold covering circle of $\partial$ where these functions become single-valued.
The ramified circle drawn by Stokes can be identified with 
the circle $\langle 2x^{3/2}\rangle$, with $x=1/z$.

\begin{rmk}\label{rmk: Idefn}
Here is a coordinate independent definition of $\cI$.
For each interval $U\subset \partial$, let $\cF(U)$ be the algebra of (germs at $0$ of) 
holomorphic functions on $\Sect(U)$
which have finite monodromy (i.e. each $f\in \cF(U)$ has an analytic continuation around 
$0$ and returns to itself after a finite number of turns).
Let $\cF_m(U)\subset \cF(U)$ 
be the subalgebra of such functions which have at most a pole at zero 
(i.e. after analytic continuation they become meromorphic functions on a covering disc).
Let $\cF_h(U)\subset \cF_m(U)$ be the subalgebra of functions which, after analytic continuation, become holomorphic functions on a covering disc.
Then $\cI$ is %
defined by 
$\cI(U) = \cF_m(U)/\cF_h(U)$.
\end{rmk}

Deligne's way \cite{deligne78} of stating  the formal classification of connections on vector bundles %
is then as follows (see also  \cite{Mal-irregENSlong} Thm 4.4, \cite{BV89} \S4.8, and compare \cite{BJLformal}):

\begin{thm}\label{thm: deligne formal}
The category of connections on vector bundles on the formal punctured disc is equivalent to the category of $\cI$-graded local systems.
\end{thm}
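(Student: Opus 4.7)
The plan is to construct explicit mutually inverse functors, with the Hukuhara--Levelt--Turrittin theorem as the main analytic input.

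For the functor from connections to graded local systems, start with a formal meromorphic connection $(V,\wh\nabla)$ over $\IC((z))$. After a ramified base change $z=w^r$ (for $r$ sufficiently divisible) Levelt--Turrittin gives a canonical decomposition
$$\wh\nabla\otimes_{\IC((z))}\IC((w))\ \cong\ \bigoplus_{q\in S}R_q\otimes E_q,$$
where $S\subset w^{-1}\IC[w^{-1}]$ is a finite set, $E_q=(\IC((w)),d+dq)$, and each $R_q$ is a regular singular connection. The Galois group $\mu_r=\Gal(\IC((w))/\IC((z)))$ permutes the summands via $\sigma^*(R_q\otimes E_q)\cong R_{\sigma\cdot q}\otimes E_{\sigma\cdot q}$, and any Galois orbit in $S$ corresponds to exactly one component $I\in\pi_0(\cI)$ (using the identification of components of $\cI$ with Galois orbits of exponents described just before Remark \ref{rmk: Idefn}). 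The regular singular parts, viewed as local systems on the $r$-fold cover of $\partial$ where the exponents become single valued, descend by Galois invariance to a local system on $\partial$; and the decomposition of $S$ into Galois orbits gives the $\cI$-grading.

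For the inverse functor, given an $\cI$-graded local system $V=\bigoplus_{I\in\pi_0(\cI)}V_I$, treat each component $I$ separately. Let $r=\ram(I)$ and let $\partial'\to\partial$ be the associated $r$-fold cover; the pullback of $V_I$ splits as a direct sum $\bigoplus_{i\in I_b}V_{b,i}^{\text{loc}}$ of honest local systems, each of which corresponds to a regular singular connection on $\Spec\IC((w))$. Tensor each summand with the rank one exponential $E(q_i)=(\IC((w)),d+dq_i)$ determined by the function $q_i$ labelling that graded piece, take the direct sum (which is $\mu_r$-equivariant by construction), push down along $w\mapsto w^r$ to $\Spec\IC((z))$, and sum over $I$ to obtain the desired formal connection.

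The main obstacle is to check that these functors are quasi-inverse and handle morphisms correctly. Both reduce to the rigidity fact that $\Hom(R_{q_1}\otimes E_{q_1},R_{q_2}\otimes E_{q_2})=0$ whenever $q_1\neq q_2$, since such a morphism would be a flat section of $R_{q_1}^\vee\otimes R_{q_2}\otimes E_{q_2-q_1}$, and a regular connection tensored with a nontrivial exponential admits no flat sections (the factor $e^{q_2-q_1}$ is not algebraic over $\IC((w))$). This implies canonicity of the Levelt--Turrittin decomposition (for a fixed exponent set) and that any morphism of formal connections respects the exponential decomposition, hence descends to a morphism of $\cI$-graded local systems; conversely any morphism of graded local systems lifts uniquely. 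Together with the canonicity of Galois descent, this establishes the claimed equivalence of categories.
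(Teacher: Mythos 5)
The paper does not prove this theorem: it is stated as a known result attributed to Deligne, with citations to \cite{deligne78}, Malgrange, Babbitt--Varadarajan and Balser--Jurkat--Lutz, and the text simply moves on. So there is no internal argument to compare against. Your proof via the Hukuhara--Levelt--Turrittin decomposition combined with Galois descent along $z=w^r$ is precisely the standard argument behind Deligne's reformulation, and the outline is correct. The rigidity fact you isolate --- that $\Hom(R_{q_1}\otimes E_{q_1},R_{q_2}\otimes E_{q_2})=0$ for $q_1\neq q_2$, because $R\otimes E_q$ with $q\neq 0$ has no horizontal sections over $\IC\flp w \frp$ --- is exactly what makes the decomposition canonical (the summands are then determined subspaces, not just isomorphism types), makes it functorial, and makes the Galois-equivariant structure automatic.

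Two points to tighten if you were to write this out in full. First, the ``push down along $w\mapsto w^r$'' in the inverse functor must be Galois descent (the $\mu_r$-invariants of $\pi_*$), not the raw direct image, which would multiply the rank by $r$; you clearly intend descent since you flag $\mu_r$-equivariance, but the phrase is ambiguous. Second, the well-definedness of the forward functor independently of the chosen sufficiently divisible $r$ should be recorded; it again follows from the same rigidity statement applied after a further base change. Finally, the only genuinely glossed-over step is the bridge between regular singular \emph{formal} connections and local systems on the circle $\partial'$: there is no topological blow-up of a formal disc, so this must go through the classification of regular singular connections over $\IC\flp w \frp$ by their monodromy (equivalently the exponential of the residue), identified with $\IZ$-representations and hence local systems on $S^1$, and one must verify this equivalence is compatible with the $\mu_r$-action so that descent on either side matches. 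This is standard but is the one nontrivial compatibility that completes the argument.
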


Note that each fibre $\cI_p$ of $\cI$ is a lattice (a free $\IZ$-module), by adding functions.
Thus we can view $\cI$ as a local system of lattices over $\partial$.
In turn we can consider the pro-torus $\cT_p=\Hom(\cI_p,\IC^*)$ with character lattice $\cI_p$.
This pro-torus $\cT_p$ is the 
exponential torus of \cite{MR91} p.380.
Let  $\cT$ denote 
the resulting local system of pro-tori 
over $\partial$, so $\cT_p$ is the fibre of $\cT$ at $p\in \partial$.

Thus an $\cI$-graded local system $V$ is the same thing as a local system $V$ together with a morphism 
$\cT\to \Aut V$ of local systems of groups.
(Recall that a torus action on a vector space is the same as a grading by the character lattice of the torus. Here we do the same for locally constant vector spaces and locally constant pro-tori.
By ``morphism to $\Aut(V)$'' we mean one that factors through one of the natural algebraic quotients of $\cT$, corresponding to a finite rank local system of lattices $\subset \cI$.)

More generally we will use the following definition. 

\begin{defn}
An $\cI$-graded $G$-local system is
a $G$-local system $\IL$ together with a morphism
$$\cT\to \Aut(\IL)$$
of local systems of groups over $\partial$, factoring through an algebraic quotient of $\cT$.
\end{defn}

Here 
a $G$-local system is a right torsor 
under the constant sheaf of groups $G$, whereas a principal $G$-bundle is a right torsor 
under the sheaf of groups $U\mapsto G(\cO(U))$.
The generalisation of Theorem 
\ref{thm: deligne formal} also  holds 
(cf. \cite{BV83, MR91}):
\begin{thm}
The category of connections on principal 
$G$-bundles on the formal punctured disc is equivalent to the category of $\cI$-graded $G$-local systems.
\end{thm}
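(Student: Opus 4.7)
The plan is to derive the equivalence from the vector bundle case (Theorem \ref{thm: deligne formal}) by Tannakian duality. Both sides admit Tannakian descriptions: a principal $G$-bundle with connection $(P,\nabla)$ on the formal punctured disc is equivalent, via Tannakian reconstruction, to a tensor functor $\Rep(G)\to\Conn$ into connections on vector bundles on the formal punctured disc, sending $V$ to the associated bundle with connection $(P\times^G V,\nabla_V)$; recovering $(P,\nabla)$ from this functor is standard (apply to the regular representation). Reciprocally, an $\cI$-graded $G$-local system $(\IL,\rho)$ is equivalent to a tensor functor $\Rep(G)\to(\cI\text{-graded local systems on }\partial)$ sending $V$ to $\IL\times^G V$ with $\cI$-grading induced from $\rho:\cT\to\Aut(\IL)$.

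Given these identifications, the theorem reduces to showing that Deligne's equivalence of Theorem \ref{thm: deligne formal} is a tensor equivalence. The key input is that the exponential factors of a tensor product of formal connections are the sums of the exponential factors of the two factors; this is reflected on the $\cI$-graded side by the lattice structure on each fibre $\cI_p$ (the grading on the image of $M\otimes N$ is the convolution of the gradings on the images of $M$ and $N$ via addition in $\cI_p$). Once tensor compatibility is verified, composing with Deligne's equivalence gives a bijection between tensor functors $\Rep(G)\to\Conn$ and tensor functors $\Rep(G)\to(\cI\text{-graded local systems on }\partial)$, and the Tannakian identifications on both sides translate this into the desired equivalence of categories.

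The main obstacle is the pro-torus condition on $\rho:\cT\to\Aut(\IL)$: the reconstructed $\cT$-action must factor through a single algebraic quotient, independent of the chosen representation. This finiteness I would extract from a faithful finite-dimensional representation $V$ of $G$. The finite-rank local system $\IL\times^G V$ admits only finitely many exponential factors, cutting out a finite-rank sublattice $\Lambda\subset\cI$ on each fibre. Every other $W\in\Rep(G)$ is a subquotient of a direct sum of tensor products of $V$ and $V^*$, so by the tensor compatibility above the exponential factors occurring in $\IL\times^G W$ all lie in the lattice $\Lambda$, and the $\cT$-action on $\IL$ therefore factors through the algebraic quotient of $\cT$ dual to $\Lambda$, as required. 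Once these tensor and finiteness compatibilities are pinned down, the theorem reduces to Theorem \ref{thm: deligne formal} via the standard Tannakian yoga; this is essentially the content of the references \cite{BV83, MR91}.
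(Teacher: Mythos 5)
The paper gives no proof of this theorem: the statement is attributed to \cite{BV83, MR91} without argument, so there is no in-text proof to compare against. Your Tannakian strategy is a standard and reasonable way to bootstrap from the $\GL_n$ case of Theorem~\ref{thm: deligne formal}, and the two subtleties you flag --- the tensor compatibility of Deligne's equivalence (reflecting the additive structure of $\cI$), and the need for the $\cT$-action on a reconstructed $G$-torsor to factor through a single algebraic quotient --- are exactly the right things to worry about. Your finiteness argument via a faithful representation $V$ is sound, and since $G$ is connected reductive one can even avoid appealing to general subquotients: $\Rep(G)$ is semisimple, so every object is a direct summand of a finite sum of $V^{\otimes a}\otimes(V^*)^{\otimes b}$, and the exponential factors of any direct summand clearly lie in the lattice generated by those of $V$ and $V^*$. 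Two of the Tannakian steps are stated informally: recovering $(P,\nabla)$ from the tensor functor by ``applying to the regular representation'' needs ind-objects or the Hopf-algebra formulation, since $\IC[G]$ is infinite-dimensional; and assembling the compatible gradings on all the $\IL\times^G W$ into a single morphism $\cT\to\Aut(\IL)$ should be spelled out (it comes from the usual Tannakian identification of the $\otimes$-automorphism group of the fibre functor with $G$, applied to the automorphisms induced by $\cT$). These are standard bookkeeping rather than gaps; the outline is correct and consistent in spirit with the group-theoretic / differential Galois viewpoint of the cited references.
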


Similarly if we replace the constant group $G$ by 
a local system $\cG$ of groups over $\partial$
then we can define the notion of $\cI$-graded $\cG$-local system in the same way,
i.e. a $\cG$-local system $\IL$ plus a morphism $\cT\to \Aut(\IL)$. 
Similarly to above, 
one can show that they classify connections on torsors under non-split reductive groups over 
$\IC\flp z\frp$ (any such torsor is trivial and any such group is quasi-split and so determined by an outer automorphism of $G$).
When we speak of ``graded local systems'' we will always mean $\cI$-graded, unless otherwise stated.
 
 Note that the image of any morphism 
 $\cT\to \Aut(\IL)$ will be a twisted torus, 
 i.e. a local system $\IT\subset \Aut(\IL)$ of finite dimensional complex tori over $\partial$.
 Given a basepoint $b\in\partial$ the fibre $\IT_b$ of $\IT$ at $b$ occurs in the differential Galois group of the corresponding connection and is known as the {Ramis torus}.
 The character lattice of $\IT$ is a finite rank local system of free $\IZ$-modules $I\subset \cI$.
 The choice of such a quotient $\IT$ of $\cT$ is 
 determined by the choice of such $I\subset \cI$.
 A $\IZ$-basis of $I_b$ is just a certain  list $q_1,\ldots q_n$ of functions of the form \eqref{eq: locsn q} defined near $b$.

We will say that $\IL$ has ``twisted irregular type'' if $\IT$ is non-constant, i.e. 
if $I$ has non-trivial monodromy.

\subsection{Points of maximal decay}\label{ssn: pom}
Each circle $I=\langle q \rangle \subset \cI$ (except $\langle 0 \rangle$) has some special marked points on it:
a point $p\in I$ is a ``point of maximal decay'' if the corresponding function $q$ is such that 
$\exp(q)$ has maximal decay at $p$ (as $z\to 0$ in that direction).
We will call such points of maximal decay {\em p.o.m.}s, or {\em apples}.
There are $\deg(q)$ p.o.m.s on $I$.
For example if $q=\la z^{-i/n}$ they are the $i$ points where $q$ is real and negative.
In Stokes' Airy example the apples are the points $a,b,c$. They were singled out by him as the points where discontinuities in the constants occur (\cite{stokes1857} p.118), and this 
was generalised in the modern multisummation approach to the Stokes phenomenon that we use here (see e.g. \cite{MR91, L-R94, ramis-history}).

\newq{}
\subsection{Irregular types and irregular classes}

\begin{defn}
Two graded $\cG$-local systems
have the same ``bare irregular type'' or ``irregular class'' if they are locally isomorphic as graded local systems over $\partial$.
\end{defn}

If $\IL$ is a graded $\cG$-local system,
let $[\IL]$ denote the underlying irregular class/bare irregular type (i.e. its class under the above equivalence relation).

In more detail suppose $\IL^1,\IL^2$ are graded $\cG$-local systems so we have maps
$\phi^i:\cT\to \Aut(\IL^i)$ with images $\IT^i$ ($i=1,2$) say.
Then $\IL^1,\IL^2$ are locally isomorphic if they are isomorphic 
at any point $b\in\partial$, i.e. if there is an isomorphism
$\IL^1_b \to \IL^2_b$ (of $\cG_b$-torsors) such that the induced map 
$\Aut(\IL^1_b) \cong \Aut(\IL^2_b)$ makes the diagram 
\begin{equation}	\label{cd: lociso}
\begin{array}{ccc}
  \,\,\cT_b & = & \,\,\cT_b \\
\mapdown{\phi^1} && \mapdown{\phi^2} \\
  \Aut(\IL^1_b) & \mapright{\cong} & \Aut(\IL^2_b).  
\end{array}
\end{equation}
commute.
This implies that $\IT_b^1\cong \IT_b^2$ (as quotients of $\cT_b$), so correspond to the same (finite rank) lattice $I_b\subset \cI_b$.
Thus if $\cG$ is constant, an irregular class is determined by the quotient $\IT_b$ of  $\cT_b$ (for some $b$) plus the conjugacy class of the embedding 
$\IT_b\hookrightarrow G\cong \Aut(\IL_b)$.
Further if $G$ general linear group this amounts to remembering just the multiplicity of each component of $\cI$, as in \eqref{eq: multj}. In other words:

\begin{prop}\label{prop: gln irclass}
If $\cG=\GL_n(\IC)\times \partial$ then an irregular class is a 
map $\pi_0(\cI)\to \IN$ assigning a multiplicity $n_I\in \IN$ to each component $I\subset \cI$, such that $\sum n_I\cdot \ram(I)=n$.
\end{prop}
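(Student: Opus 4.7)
The plan is to use the observation in the paragraph immediately preceding the proposition: for constant $\cG$, an irregular class is determined by the quotient torus $\IT$ of $\cT$ together with the $\GL_n(\IC)$-conjugacy class of the embedding $\IT_b\hookrightarrow \GL_n(\IC)$ at a chosen basepoint $b\in\partial$, plus the monodromy data making $\IT$ a local system of tori. I would linearise the pointwise datum as a weight decomposition and then impose the global compatibility forced by the monodromy of $\cI$.

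First, I would unpack the pointwise data. Up to conjugacy, an embedding of a torus into $\GL_n(\IC)$ is the same as a weight decomposition $\IC^n=\bigoplus_\chi V_\chi$ indexed by characters $\chi$ of $\IT_b$ (this is special to $\GL_n$: conjugacy classes of such embeddings are classified by the character of the resulting $n$-dimensional representation). By construction $\chi$ lies in the character lattice $I_b\subset \cI_b$ of $\IT_b\subset \cT_b$, so the data is encoded by a multiplicity function $m:\cI_b\to\IN$, $\chi\mapsto\dim V_\chi$, with finite support $S_b\subset\cI_b$ and $\sum_\chi m(\chi)=n$.

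Next I would impose the global condition. For $\IT$ to form a sub-local system of tori and the weight spaces to be locally constant (as the grading definition of Section 3.2 requires), the monodromy $\si$ of $\cI$ must preserve the support $S_b$ and the function $m$ must be $\si$-invariant. Hence $S_b$ is a disjoint union of $\si$-orbits in $\cI_b$ and $m$ is constant on each orbit. By the dictionary set up in Section 3.3, a $\si$-orbit in $\cI_b$ is precisely the fibre $I_b$ of a single connected component $I\in\pi_0(\cI)$, and has cardinality $\ram(I)$. Setting $n_I:=m(\chi)$ for any $\chi\in I_b$ (and $n_I=0$ if $I_b\cap S_b=\emptyset$) yields a well-defined function $\pi_0(\cI)\to\IN$, and summing dimensions gives the constraint
$$n=\sum_{\chi\in S_b}m(\chi)=\sum_{I\in\pi_0(\cI)} n_I\cdot\ram(I).$$

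It remains to check that this assignment is a bijection. Injectivity is immediate since $(n_I)$ determines the weight multiset and the Galois action, hence the conjugacy class of $\IT_b\hookrightarrow \GL_n(\IC)$ together with its monodromy. Surjectivity is by explicit construction: given $(n_I)$ satisfying the constraint, form $\IL_b=\bigoplus_I (\IC^{n_I}\otimes\IC[I_b])$ with $\si$ acting on each factor $\IC[I_b]$ by the permutation of its basis and with $\cT_b$ acting on the $\chi$-summand via the character $\chi\in I_b\subset\cI_b$; this defines a graded $\GL_n$-local system realising the prescribed class. The main step to handle carefully is the two-viewpoint dictionary for $\cI$—as a local system of lattices (to speak of characters) versus as a disjoint union of circles over $\partial$ (to speak of components)—but this translation has already been set up earlier and amounts to bookkeeping.
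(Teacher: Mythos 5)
Your proposal is correct and follows essentially the same approach as the paper, which treats the proposition as a direct consequence of the preceding two sentences (an irregular class for constant $\cG$ is the quotient $\IT_b$ of $\cT_b$ plus the conjugacy class of the embedding $\IT_b\hookrightarrow G$, which for $\GL_n$ reduces to the multiplicity data of \eqref{eq: multj}). You simply spell out the bookkeeping that the paper leaves implicit: the weight decomposition of $\IC^n$, the $\si$-invariance forced by the grading axiom, the identification of $\si$-orbits in $\cI_b$ with fibres of components $I\in\pi_0(\cI)$ of size $\ram(I)$, and an explicit construction for surjectivity.
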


\begin{exercise}
First suppose $\cG$ is constant.
Let $\lt\subset \g$ be a Cartan subalgebra of $\g=\Lie(G)$ and let
$\wh\lt=\lt\flp z \frp\subset \g\flp z \frp$ 
be the corresponding Cartan subalgebra of the loop algebra.

1) Show that any element $Q\in \wh\lt/\lt\flb z \frb$
determines an irregular class $\bar{Q}$ and that any untwisted irregular class arises in this way.

Now let $\wh\lt \subset \g\flp z \frp$ be any Cartan subalgebra of the loop algebra.
Recall that one can take a root $w$ of $z$ so that 
$\wh\lt$ may be conjugated into $\lt\flp w \frp$ via $G\flp w \frp$ (note that Cartan subalgebras are conjugate over algebraically closed fields).

2) Show that the $w$-adic filtration of $\lt\flp w \frp$ 
yields a {\em canonical} filtration of $\wh\lt$.
Let $\wh\lt_{\ge 0}$ be the non-negative part of $\wh\lt$ for this filtration.

3) Show that any element $Q\in \wh\lt/\wh\lt_{\ge 0}$
determines an irregular class $\bar{Q}$ and that any irregular class arises in this way for some such $\wh\lt$ and $Q$.
\end{exercise}

\ 

Thus given a choice of Cartan subalgebra $\wh\lt$ we can define an 
``irregular type for $\wh\lt$'' to be an element 
$Q\in \wh\lt/\wh\lt_{\ge 0}$, and this determines an irregular class $\bar Q$.
This definition of twisted irregular type 
was suggested in \cite{gbs} Rmk 8.6.
More generally (if $\cG$ has finite monodromy) we can proceed similarly with Cartan subalgebras in the corresponding twisted loop algebra (since everything straightens out on passing to a cover).

\newq{}
\subsection{Framings and standard fibres}

Suppose $Q$ is an irregular class and $\cP\to \partial$ is a graded $\cG$-local system 
in the class $Q$.

Choose a basepoint $b\in \partial$ and fix an isomorphism $G\cong \cG_b$.
Then $\cP_b$ is a graded $G$-torsor: there is a map
$\varphi:\cT_b\to \Aut(\cP_b)$
of groups.
Let $\cF_0$ denote the trivial $G$-torsor, 
so $\Aut(\cF_0)=G$ acting by left translation.
Given a point $p\in \cP_b$ there is an isomorphism $\cF_0\to \cP_b$ of $G$-torsors 
and in turn an isomorphism $\Aut(\cP_b)\cong \Aut(\cF_0)=G$ of groups, 
and so  $\varphi$ becomes a map $\cT_b\to G$. 
Thus $\cF_0$ inherits a grading in this way.

\begin{defn}
A ``standard fibre $\cF$ for $Q$ at $b$'' is a grading of $\cF_0$
, i.e. a group map $\varphi:\cT_b\to \Aut(\cF_0)=G$,
such that $\varphi$ arises as above for some choice of $\cP$ and $p$.
\end{defn}

Now suppose we fix a standard fibre $\cF$ for $Q$ at $b$.
Then for any 
 graded $\cG$-local system $\IL$ in the class $Q$ we can make the following definition.

\begin{defn}
A framing of $\IL$ at $b$ is 
an isomorphism $\cF\to\IL_b$ of graded $G$-torsors,
i.e. a $\cT_b$-equivariant map of $G$-torsors,
where $\cT_b$ acts via the left actions of $\Aut(\IL_b)$ and $\Aut(\cF_0)$ respectively. 
\end{defn}

Let $\IT_b\subset G$ denote the image of $\cT_b$ and let 
$H\subset G$ be the centraliser of $\IT_b$.
Then $H$ is a connected complex reductive group, 
and the set of framings of $\IL$ at $b$ is an $H$-torsor.

Note that we could fix a maximal torus $T\subset G$ and 
define a ``very standard fibre'' to be a 
standard fibre with $\varphi$ landing in $T$.
This is often convenient since a 
map $\cT_b\to T$ corresponds to  a $\IZ$-linear map $X^*(T)\to \cI_b$.
In the untwisted case choosing a very standard fibre is equivalent to choosing an irregular type in the sense of \cite{gbs}.

\newq{}
\subsection{Gradings versus Reductions}\label{ssn: gvr}

Suppose $\cP\to \partial$ is an auxiliary graded $\cG$-local system, 
with irregular class $Q=[\cP]$.
Let $\IT\subset \Aut(\cP)$ be the image of $\cT$
and
let $\cH\subset \Aut(\cP)$ be the centraliser of 
$\IT$, i.e. $\cH=\GrAut(\cP)$ is the local system of graded automorphisms of $\cP$.
Now suppose $\IL$ is any $\cG$-local system over $\partial$.
Given the auxiliary graded local system $\cP$ we can make the following definition.

\begin{defn}
A ``flat reduction of $\IL$ to $\cH$'' is 
an $\cH$-local system $\IL_0$ 
 plus an isomorphism
$\chi: \IL\cong \IL_0\times_{\cH} \cP$
of $\cG$-local systems on  $\partial$.
\end{defn}

\begin{lem}\label{lem: gr vs redn}
Choosing a flat reduction 
of $\IL$ to $\cH$ is equivalent to 
choosing a grading of $\IL$ such that the resulting graded local system 
is in the same irregular class as $\cP$.
\end{lem}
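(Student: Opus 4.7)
The strategy is the standard one used to identify a choice of reduction of structure group with extra structure: construct functors in both directions and check they are mutually quasi-inverse. Throughout I regard the grading of $\cP$ (and of $\IL$ in the second direction) as a morphism of local systems of groups $\varphi:\cT\to\Aut(\cdot)$, and I note the crucial fact that $\IT=\varphi(\cT)\subset\Aut(\cP)$ is commutative, so $\IT$ lies in its own centraliser, hence $\IT\subset\cH$, and moreover $\IT$ is central in $\cH$.

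\emph{Reduction $\Rightarrow$ grading.} Given $(\IL_0,\chi)$, transport the question through $\chi$, so assume $\IL=\IL_0\times_\cH \cP$. The $\cT$-action on the right factor $\cP$ is given by $\cT\to\IT\subset\cH$; because $\IT$ is central in $\cH$ this action commutes with the $\cH$-equivalence relation defining the contracted product, and so descends to a well-defined morphism of local systems of groups $\cT\to\Aut(\IL_0\times_\cH\cP)$ commuting with the right $\cG$-action. This is the sought-for grading on $\IL$. Locally, $\IL_0$ is trivial (since it is a torsor under a local system of groups), so locally $\IL_0\times_\cH\cP\cong\cP$ as graded $\cG$-local systems, giving $[\IL]=[\cP]$.

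\emph{Grading $\Rightarrow$ reduction.} Given a grading on $\IL$ with $[\IL]=[\cP]$, define
\[
 \IL_0 \;=\; \GrIso(\cP,\IL),
\]
the local system whose sections over $U$ are the graded (i.e.\ $\cT$-equivariant) isomorphisms $\cP\vert_U\to\IL\vert_U$ of $\cG$-torsors. Right composition with graded automorphisms of $\cP$ makes $\IL_0$ into a right $\cH$-torsor fibrewise, and it is locally non-empty precisely because $[\IL]=[\cP]$, so $\IL_0$ is an $\cH$-local system. Evaluation $(\phi,p)\mapsto\phi(p)$ is $\cH$-balanced and $\cG$-equivariant, hence defines a natural morphism $\chi:\IL_0\times_\cH\cP\to\IL$ of $\cG$-local systems, which is an isomorphism since it is locally so (trivialising $\IL_0$ locally reduces $\chi$ to the tautological isomorphism $\cP\cong\cP$).

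\emph{Inverses and main difficulty.} The two constructions are canonically inverse: starting from a reduction $(\IL_0,\chi)$ and producing the grading, then applying $\GrIso(\cP,-)$ gives back $\IL_0$ via $\phi\mapsto\phi$, while starting from a grading and passing to $\GrIso(\cP,\IL)\times_\cH\cP$ recovers the grading through the evaluation isomorphism. The single step that really carries content is verifying that the $\cT$-action descends to the associated bundle; this is exactly where one uses that $\cH$ was defined as the \emph{centraliser} of $\IT$ (rather than, say, a normaliser), so I expect this centrality check, together with the local existence of graded isomorphisms $\cP\cong\IL$ which requires Definition~3.6 of an irregular class, to be the only non-formal ingredients. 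Everything else is the standard torsor--reduction dictionary applied fibrewise and sheafified.
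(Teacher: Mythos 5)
Your proposal is correct and follows essentially the same route as the paper: both directions use $\IL_0=\GrIso(\cP,\IL)$ for one implication and the centrality of $\IT$ in $\cH$ (so that $\IT$ acts on the associated bundle $\IL_0\times_\cH\cP$) for the other. The paper's proof is simply a terser version of yours, omitting the explicit quasi-inverse check.
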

\pf
Given a grading of $\IL$ 
in the class $Q$,
consider the local system $\IL_0:=\GrIso(\cP,\IL)$
of graded isomorphisms from $\cP$ to $\IL$.
Since $[\IL]=Q$, $\IL_0$ is nontrivial
and is an $\cH$-local system.
The natural map 
$\IL_0\times_\cH\cP\to \IL$ is an isomorphism, so a grading gives a reduction to 
$\cH$.
Conversely given a reduction $(\IL_0,\chi)$
then  $\IL_0\times_\cH \cP$ 
is graded (as $\IT$ lies in the centre of $\cH$, 
so acts on both sides of $\IL_0$), and isomorphic to $\IL$ via $\chi$.
\epf

Note that this is a generalisation/modification of the usual notion of reduction of structure group, since we twist by $\cP$ here (it reduces to the usual notion if $\cP$ is trivial as a 
$\cG$-local system).
For example in the untwisted case 
we can always take $\cP$ to be trivial as a $G$-local system,  
so gradings are then related to 
reduction of structure group in the usual sense, as in \cite{fission, gbs}. 
One could also avoid this by working with disconnected groups.

\newq{}
\subsection{Singular directions and Stokes groups}

Suppose $\IL\to \partial$ is a graded $\cG$-local system.
Thus we have a map $\cT\to \Aut(\IL)$ and 
so the Lie algebra bundle 
$\ad (\IL)=\Lie(\Aut(\IL))$ 
is an $\cI$-graded local system of vector spaces (from the $\cT$ action via the adjoint action of $\Aut(\IL)$).
Therefore for any $d\in \partial$ we may write 
\beq\label{eq: local roots}
\ad(\IL)_d = \bigoplus_{i\in \cI_d}\ad(\IL)_{d,i}.
\eeq
Recalling \S\ref{ssn: pom},    this has a distinguished subspace:
\beq\label{eq: stokes roots}
\isto_d:= 
\langle \ \ad (\IL)_{d,i} \st  
\text{ $i\in \cI_d$ is a point of maximal decay }
\rangle\subset \ad (\IL)_d.
\eeq
This is a nilpotent Lie subalgebra (cf. 
\cite{gbs} Lemma 7.3).
By definition the Stokes group in the direction $d$
is the corresponding connected unipotent group:
\beq
\ISto_d := \exp(\isto_d) \subset \Aut(\IL)_d.
\eeq
In turn the singular directions $\IA\subset \partial$ are the directions $d\in \partial$ such that  $\ISto_d$ has non-zero dimension. (They are sometimes called  ``anti-Stokes directions''.)

Said differently the pro-torus $\cT_d$ maps onto a torus $\IT_d$, which lies in some maximal torus $T\subset \Aut(\IL_d)\cong G$.
Dually $X^*(T)$, and thus each root of $T$, maps to $\cI_d$.
Then $\sto_d$ is the sum of the root spaces for the roots which map to a p.o.m.

For example if $V\to \partial$ is a local system of vector spaces, graded by $I=\langle x^{1/3}\rangle$, then $\Lie(\Aut(V))=\End(V)$ is graded by the degree $7$ cover
$\langle 0\rangle \cup\langle(1-\omega)x^{1/3}\rangle \cup\langle(\omega-1)x^{1/3}\rangle,$
where $\omega=\exp(2\pi i/3)$, containing $2$ apples.  
Note that in Stokes' example $\End(V)$ is graded by
the degree $3$ cover $\langle 0\rangle\cup\langle 4x^{3/2}\rangle$, 
and the Stokes diagram of 
$\langle 4x^{3/2}\rangle$ is the same as that of $\langle 2x^{3/2}\rangle$, so the points $a,b,c$ can be taken to be those defining $\IA$.

The grading \eqref{eq: stokes roots} of $\sto_d$ can be further refined by the levels:
each graded piece $\ad(\IL)_{d,i}$ has a level $\lev(i)\in \IQ$
so for any $k\in \IQ$ there is a level $k$ subspace $\sto_d(k)\subset \sto_d$. These
are Lie subalgebras and exponentiate to subgroups $\ISto_d(k)\subset\ISto_d$.
In turn this yields a direct spanning decomposition
\beq\label{eq: level deomp}
\ISto_d(k_1)\times\cdots  \times \ISto_d(k_r)\cong \ISto_d
\eeq
where $k_1< k_2<\cdots <k_r$ are the levels of the non-zero pieces occurring in 
\eqref{eq: local roots}.
Passing to a cover all this is isomorphic to that studied in \cite{gbs}, which
extends the $G$-Stokes data in \cite{bafi}, and  parts of \cite{MR91, L-R94} (which already handle the local twisted case for constant general linear groups).

\newq{}

\section{Twisted Stokes local systems} \label{sn: sls}

Let $G$ be a connected  complex reductive  group.
Let $\Si$ be a compact Riemann surface (possibly with boundary).
Let $\al\subset \Si$ be a finite subset of points (not on the boundary).
Let $\cG\to\Si^\circ$ be a local system of groups with each fibre isomorphic to $G$,
where $\Si^\circ = \Si\setminus \al$.
Let $\pi:\wh \Si\to \Si$ be the real oriented blow up of $\Si$ at each point of $\al$.
Let $\partial$ denote the boundary of $\wh \Si$, which we suppose is non-empty.
For each boundary circle $\partial_i\subset \partial$ 
let $\cT_i\to \partial_i$ denote the corresponding local system of pro-tori (which we take to be trivial if $\partial_i$ was already a boundary component of $\Si$).

Extend $\cG$ to $\wh \Si$ in the obvious way, and let $\cG_i$ 
denote the restriction of $\cG$ to $\partial_i$.

Choose a graded $\cG_i$-local system $\cP_i\to\partial_i$
(it is graded by $\cT_i$).
Let $Q_i=[\cP_i]$ be the corresponding irregular class and let $\bQ=\{Q_i\}$.

Write ${\bf \Si}=(\Si,\al,\bQ)$ for the resulting twisted bare irregular curve/wild Riemann surface, with structure group $\cG$. (This is convenient notation; as in
\cite{smid, bafi, gbs} the moduli of such $\bSi$ behaves like the moduli of the underlying Riemann surface.)

Define an auxiliary surface $\wt \Si$ as follows.
Let $\IA_i\subset \partial_i$ be the singular directions of $Q_i$ and let 
$\IA = \bigcup \IA_i\subset \partial.$
Let $\IH_i\subset \wh \Si$ be a tubular neighbourhood of $\partial_i$, the $i$th ``halo''. 
Let $\partial'_i\neq\partial_i$ denote the 
interior boundary circle of $\IH_i$.
For each $d\in \IA_i$ choose
$e(d)\in \partial'_i$ so that 
the points $e(d)$ are in the same order as the points $d\in\IA_i \subset\partial_i$.
Then  puncture
$\wh \Si$ at each point $e(d)$ to define 
\beq \label{eq: assoc.surface}
\wt\Si := \wh\Si \setminus \{e(d)\st d\in \IA  \}.
\eeq
Thus  there are surfaces and  maps:
$\wt\Si\into \wh \Si\onto \Si.$
We will sometimes call the punctures $e(d)$ ``tangential punctures''.
Their exact position is not important (it is sometimes convenient to draw non-crossing cilia between 
each $d$ and $e(d)$, which can be pulled tight whenever required, as in \cite{gbs}) 
A slightly different approach to defining 
$\wt \Si$ is possible, cf. \cite{p12} Apx. B.

Let $\cG$ also denote the restriction of $\cG$ to $\wt \Si$ (it has no monodromy around any tangential puncture).
For any $d\in\IA$ let $\ga_d$ be a
simple loop in $\wt \Si$  based at $d$ going once around the extra puncture $e(d)$ (and no others).

\begin{defn}
A twisted Stokes $\cG$ local system on $\bSi$ is a 
$\cG$-local system
$\IL\to\wt \Si$ 
such that 1) the restriction of $\IL$ 
to $\IH_i$ is graded
(with irregular class $Q_i$), and 2)
the monodromy of $\IL$ around 
$\ga_d$ is in the Stokes group
$\ISto_d\subset \Aut(\IL)_d$
for all $d\in \IA$.
\end{defn}

Here we extend $\cT_i$ from $\partial_i$ to $\IH_i$ in the obvious way
and use this to grade $\IL\bigl\vert_{\IH_i}$.
An isomorphism of Stokes local systems 
is an isomorphism of $\cG$-local systems on 
$\wt \Si$ which
restricts to an isomorphism of graded 
$\cG_i$-local systems on each halo.

We will say that $\IL$ is a ``twisted'' Stokes $\cG$-local system if any of the irregular types are twisted, or if $\cG$ is twisted. 
Otherwise it is ``untwisted''.

Using the auxiliary graded local system 
$\cP_i$ on $\IH_i$ (rather than just the classes $Q_i$)
then we can set $\cH_i=\GrAut(\cP_i)\to \IH_i$
so that a Stokes local system may be defined
equivalently 
(cf. Lemma \ref{lem: gr vs redn}) 
as a $\cG$-local system
$\IL$ on $\wt \Si$ 
together with a flat reduction to 
$\cH_i$ on $\IH_i$ for all $i$,
with monodromy around $\ga_d$ in $\ISto_d$
for all $d\in \IA$.

\begin{rmk}
Note the same definition makes sense for any complex affine algebraic group 
$G$. For groups over other fields we could replace the appearance of $\IC^*$ by 
$\IG_m$ in the definition of $\cT$.
\end{rmk}

\subsection{Framed moduli spaces}
Choose a basepoint $b_i\in\partial_i$ in each component of $\partial$.
Let $\be\subset \partial$ be the set of these basepoints.
Fix a framing of $\cG$ at each $b\in \be$, i.e. a 
group isomorphism $\cG_b\cong G$,
so any $\cG_b$-torsor is now canonically a $G$-torsor.
For each $b=b_i\in \be$, let 
$\cF_i=\cF_b$ be a standard fibre at $b$, isomorphic to  the 
fibre of $\cP_i$ at $b$, as graded $G$-torsors.
 Then a framing at $\be$ of a  Stokes $\cG$-local system $\IL$ is an
 isomorphism
 $$\cF_b\to \IL_b$$
of graded $G$ torsors for each $b\in \be$.
Let $H_i=\GrAut(\cF_i)\subset G$ be the group of graded automorphisms of $\cF_i$, and let
$\bH=\Prod H_i$. It is a connected complex reductive group. 

Let $\wMB(\bSi,\cG)$  denote the set of isomorphism classes 
of framed twisted Stokes $\cG$-local systems on $\bSi$.
Our next aim is to describe this more explicitly, and the next section prepares for this. 

\newq{}
\subsection{Formal monodromy of graded $\cG$-local systems}\label{ssn: formal monod}

Returning briefly to the set-up of section \ref{sn: grloc},
let $\IL\to\partial$ be a graded $\cG$-local system.
Choose a basepoint $b\in \partial$.
Then there is a distinguished subset 
$$H(\partial)\subset \Perm(\IL_b)$$
of the set of all bijections $\IL_b\to \IL_b$, where the monodromy of $\IL$ lives.
We will describe it here and show it is a twist of the group $H=\GrAut(\IL_b)$.
First we give the abstract approach, then a more concrete one (involving more choices).

The fibre $\IL_b$ is a $\cG_b$-torsor.
Let $\IT_b\subset \Aut(\IL_b)$ be the Ramis torus (the image of $\cT_b$).
Then $H=\GrAut(\IL_b)$ is the centraliser of $\IT_b$.
Given $t\in \IT_b$ let $t'\in \IT_b$ be the element obtained by parallel translating $t$ around $\partial$ (once in a positive sense).
Similarly $g\in \cG_b$ yields $\phi(g)\in \cG_b$
and $p\in \IL_b$ yields $\mon_\partial(p)\in \IL_b$.
Thus $\phi\in\Aut(\cG_b)$ and $\mon_\partial\in \Perm(\IL_b)$.
Since everything is locally constant the following hold (where $m=\mon_\partial$):

1) $m(pg)=m(p)\phi(g)$ for all $p\in \IL_b, g\in \cG_b$, and 

2) $m\circ t = t' \circ m\in \Perm(\IL_b)$.

The first of these says that $m$ is in a twist of $\Aut(\IL_b)\cong G$ determined by $\phi$, and 2) says $m$ is in a certain subset of this twist.
Let $H(\partial)\subset  \Perm(\IL_b)$
be the set of elements $m$ satisfying 1) and 2).

\begin{lem} \label{lem: Htwist}
The set $H(\partial)$
of elements $m$ satisfying 1) and 2) 
is a twist of $H$. 
\end{lem}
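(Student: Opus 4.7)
The plan is to verify that $H(\partial)$ carries commuting free transitive left and right actions of $H$, which by \S\ref{ssn: torsors} is exactly what it means to be an $H$-bitorsor, i.e.\ a twist of $H$. Non-emptiness of $H(\partial)$ comes for free: the actual parallel transport $\mon_\partial$ around $\partial$ is a bijection $\IL_b \to \IL_b$, and conditions 1) and 2) are just the statements that parallel transport intertwines the right $\cG$-action and the $\cT$-action with their respective monodromies, both of which hold by definition of a graded $\cG$-local system.

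Next I would check that pre- and post-composition by an element $h \in H = \GrAut(\IL_b)$ preserves $H(\partial)$. Condition 1) is preserved because $h$ is $\cG_b$-equivariant, so the factor of $\phi(g)$ passes through $h$ untouched; condition 2) is preserved because $h$ commutes with every $t' \in \IT_b$. Freeness of both actions is automatic from the free action of $\Perm(\IL_b)$ on itself, and the left and right actions commute by associativity of composition.

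The essential step is transitivity, i.e.\ showing $m' \circ m^{-1} \in H$ whenever $m, m' \in H(\partial)$. A direct substitution using 1) for $m$ yields $m^{-1}(rg) = m^{-1}(r)\phi^{-1}(g)$, and combining this with 1) for $m'$ shows that the two $\phi$-twists cancel, so $m' \circ m^{-1}$ is genuinely $\cG_b$-equivariant and hence lies in $\Aut(\IL_b)$. Rewriting 2) for $m$ as $m^{-1} \circ t' = t \circ m^{-1}$ and combining with 2) for $m'$ then gives
\[
(m'\circ m^{-1})\circ t' \;=\; m'\circ t\circ m^{-1} \;=\; t' \circ (m'\circ m^{-1})
\]
for every $t \in \IT_b$, placing $m'\circ m^{-1}$ in the centraliser of $\IT_b$, which is $H$.

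I do not anticipate any real obstacle; the only subtlety is the bookkeeping around the outer automorphism $\phi$, specifically recording that $m^{-1}$ satisfies the variant of 1) twisted by $\phi^{-1}$ so that the two twists cancel in the product. Once this is in hand the lemma is immediate.
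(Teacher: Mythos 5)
Your proof is correct and takes essentially the same route as the paper: check that $H\times H$ acts on $H(\partial)$ by pre- and post-composition, then establish transitivity by showing that $m'\circ m^{-1}$ satisfies the untwisted versions of 1) and 2) and hence lies in $H=\GrAut(\IL_b)$. The paper states this more tersely (taking non-emptiness and preservation under the action as obvious), but the argument is identical in substance.
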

\pf 
Clearly $H\times H$ acts, via $(h_1,h_2)(m)= h_1mh_2$.
Also if $m_1,m_2$ are two solutions, then 1) implies $m_1m_2^{-1}\in \Aut(\IL_b)$
and 2) then implies $m_1m_2^{-1}\in H$,
so the space of solutions is indeed an $H$-bitorsor.\epf

To make this more explicit
fix an isomorphism $G\cong \cG_b$,
a standard fibre $\cF$, and a graded isomorphism 
$\cF\cong \IL_b$.
Then we can identify $\phi\in \Aut(G)$,  
and the Ramis torus becomes a torus $\IT_b\subset G=\Aut(\cF)$,
which is equipped with its monodromy automorphism $t\mapsto t'$.
The set of
elements $m$ satisfying 1) becomes the twist 
$G(\phi)=\{(g,\phi)\st g\in G\}\subset G\ltimes \Aut(G)$,
so any $m$ satisfying 1) can be written as $(M,\phi)\in G(\phi)$.
In turn the solutions to 1) and 2) becomes the subset
$H(\partial)\subset G(\phi)$ 
of elements $(M,\phi)$  such that
$(M,\phi)(t,1) = (t',1)(M,\phi)$,
i.e. 
\beq\label{eq: concrete Mcondn}
M\in G \quad\text{ such that }\quad M\phi(t) = t' M\in G
\eeq
for all $t\in \IT_b$.
Further, if we have an auxiliary graded $\cG$ local system $\cP$ in the same class as $\IL$, with monodromy $(P,\phi)$ say, then \eqref{eq: concrete Mcondn} says that 
$t'=P\phi(t)P^{-1}$
so the constraint on $M$ is that $MP^{-1}$
commutes with every element of $\IT_b$, i.e. 
\beq \label{eq: very concrete Mcondn}
 M\in HP = P\phi(H) \subset G.
 \eeq 
Note that $H(\partial)\subset G(\phi)$ doesn't depend on the specific graded $\cG$-local system chosen, only the irregular class.

\begin{rmk}
Equivalently we can consider the group $\wt G:= G\ltimes \Ga\subset G\ltimes \Aut(G)$ 
where  $\Ga=\langle \phi\rangle \subset \Aut(G)$, so that $G(\phi)$ is a component
of $\wt G$, and $\IT_b\subset G$ is a torus in the identity component of $\wt G$.
Then $H(\partial)\subset G(\phi)$ is
a distinguished component of the normaliser of $\IT_b$ in $\wt G$.
\end{rmk}

In particular this enables us to give a more explicit 
approach to graded $\cG$-local systems.
Suppose $\IL$ is a $\cG$ local system with monodromy $(M,\phi)\in G(\phi)\subset \Perm(\IL_b)$.
Then to grade $\IL$ we need a morphism $\IT\into \Aut(\IL)$,
where $\IT$ is a local system of tori, which is given as an algebraic quotient of $\cT$.
Such $\IT$ is determined by the corresponding local system 
$I=X^*(\IT)\subset \cI$ of finite rank lattices.
Thus the first choice is such $I$ (a finite rank local system of lattices, given as a sublocal system of $\cI$). 
This is determined by the lattice $I_b\subset \cI_b$ in any fibre.
The only restriction on $I_b$ is that it is closed under monodromy.
The monodromy of $I$ determines the monodromy $t\mapsto t'$ of $\IT$.
Given $\IT$ we need to embed it in $\Aut(\IL)$.
This amounts to the compatibility condition $t'=M\phi(t)M^{-1}$ given in 
\eqref{eq: concrete Mcondn}.

If we now pass to a finite cyclic cover to kill the monodromy of $\IT$ 
(see Apx. \ref{apx: untwisting})
by setting $z=w^r$,
then the embedding $I_b=X^*(\IT_b)\into \cI_b$
lands in $\IC\flp w \frp/\IC\flb w \frb$.
This $\IZ$-linear map amounts to a $\IC$-linear map 
$\Lie(\IT_b)^*\to \IC\flp w \frp/\IC\flb w \frb$,
and thus to an element
$$Q\in \lt_1\flp w \frp/\lt_1\flb w \frb$$
where $\lt_1= \Lie(\IT_b)$.
It satisfies 
\beq\label{eq: liftedQcondn}
Q(\ze w) = M\phi(Q(w))M^{-1}
\eeq
where $\ze=\exp(2 \pi i/r)$.
If we choose a maximal torus $T\subset G$ with Lie algebra $\lt$ containing $\lt_1$, then such $Q$ is 
a special case of the irregular types considered in \cite{gbs} (on the $w$-disc).

Conversely suppose we have $Q\in \lt\flp w \frp/\lt\flb w \frb$
satisfying \eqref{eq: liftedQcondn} for some $\phi\in \Aut(G)$ and $M\in G$.
Then $Q$ defines a lattice map
$$\langle Q,- \rangle: X^*(T) \to \IC\flp w \frp/\IC\flb w \frb.$$
Thus there is a unique subtorus $\IT_b\subset T$ such that $X^*(\IT_b)$
is the image of the map $\langle Q,- \rangle$. 
By construction $\IT_b$ is a quotient of $\cT_b$.
Then \eqref{eq: liftedQcondn} allows us to descend $\IT_b$ to a local system of tori $\IT$, and we can define a $\cG$-local system $\IL$ with clutching map 
$(M,\phi)\in G(\phi)$, and \eqref{eq: liftedQcondn} implies $\IL$ 
is graded by $\IT$.
Here $\cG$ is determined by $\phi$.
So $Q$ determines an irregular class in this way, and all irregular classes arise in this fashion.

\begin{rmk}\label{rmk: Hisos}
Note that, given the auxiliary graded $\cG$-local system $\cP$, choosing $\IL$ corresponds to choosing an $\cH$-local system $\IL_0\to \partial$ as in \S\ref{ssn: gvr}, where 
$\cH=\GrAut(\cP)$.
As above
the monodromy of any such $\IL_0$ lives in a twist $H(\psi)$ of $H$, where 
$H=\cH_b$ and $\psi\in \Aut(H)$ is the monodromy of $\cH$.
For consistency we should check that $H(\psi)$ is isomorphic to the twist $H(\partial)$ 
appearing above (as bitorsors for $H$).
Indeed, in the notation above $\psi=P\phi(\cdot)P^{-1}$, and 
$H(\psi)$ consists of elements $(h,\psi)$, whereas $H(\partial)=\{(hP,\phi)\st h\in H\}$.
The action of $(h_1,h_2)\in H\times H$ then takes $h$ to $h_1h\psi(h_2)$ in both cases, so they are indeed isomorphic twists of $H$.
\end{rmk}

\newq{}

\section{Classification of Stokes local systems}\label{sn: classn}

Let $\Pi=\Pi_1(\wt\Si,\be)$
denote the fundamental groupoid of $\wt\Si$
with the given basepoints.
Let $f:\Pi\to \Aut(G)$ be the monodromy of $\cG$ 
(using the chosen identifications $\cG_b\cong G$ for all $b\in \be$).
For any $\ga\in \Pi$ let 
$G(\ga)\subset G\ltimes \Aut(G)$ denote the twist of 
$G$ lying over $f(\ga)$.

Given a framed Stokes local system 
and a path $\ga\in \Pi$ from $b_1$ to $b_2$ say,
then (using the framings) we get  
a bijection 
$\mon_\ga:\cF_1\cong \cF_2$
where $\cF_i$ is the standard fibre at $b_i$.
Since $\IL$ is a $\cG$-local system 
$\mon_\ga$ lives in the twist $G(\ga)$ of $G$.
Thus a framed Stokes local system determines a twisted representation
$$\rho\in \THom(\Pi,G):=\{ \rho\in
\Hom(\Pi,G\ltimes \Aut(G))\st \rho(\ga)\in G(\ga) \text{ for all }\ga\in \Pi\}.$$
Note that, by projecting onto the $G$ factor, $\THom(\Pi,G)$ could also be defined as 
the space of maps $\rho:\Pi\to G$ satisfying the twisted composition rule
$\rho(\ga_1\circ \ga_2)= \rho(\ga_1) \phi_1(\rho(\ga_2))$ for composable paths,
where $\phi_1=f(\ga_1)$.

We will view any boundary 
circle $\partial_i\subset \partial$ 
as a loop based at $b_i$,
oriented in the positive sense, and let $\bar\partial_i=\partial_i^{-1}$ denote the opposite loop (oriented in a negative/clockwise sense).
Thus each boundary component determines a twist 
$G(\partial_i)$ of $G$.
Further, as in \S\ref{ssn: formal monod},
there is a distinguished subset  
$H(\partial_i)\subset G(\partial_i)$,
which is a twist of $H_i$.

If  $d\in \IA_i$ let
$\la_d \subsetneq \partial_i$
be the arc 
from $b_i$ to $d$ in a positive sense.
By parallel translation along $\la_d$, we identify the Stokes group 
$\ISto_d\subset \Aut(\IL_{d})$ as a subgroup of
$\Aut(\IL_{b_i})$, and thus as a subgroup of $G=\Aut(\cF_i)$.
Also let
$$\wh\ga_d := 
\la^{-1}_d\circ \ga_d \circ  \la_d\in \Pi$$
be the simple loop around 
$e(d)$ based at $b_i$, 
where $\ga_d$ is the loop around $e(d)$ based at $d$ considered above.
 Note that $\rho$ has the following two properties:

1) $\rho(\partial_i)\in H(\partial_i)\subset 
G(\partial_i)$ for any boundary circle $\partial_i$,
as in \S \ref{ssn: formal monod}, and

2) $\rho(\wh\ga_d)\in \ISto_d\subset G$
for each singular direction $d\in \IA$.

\begin{defn}
A twisted representation 
$\rho\in \THom(\Pi,G)$ is a ``Stokes representation'' if conditions 1) and 2) hold.
\end{defn}

Let $\THom_\IS(\Pi,G)\subset \THom(\Pi,G)$ denote the subset of Stokes representations.

Note that the group $\bH\subset G^\be$ 
acts on $\THom_\IS(\Pi,G)$ 
as follows: $\{k_i\} \in \bH $
sends $\rho$ to $\rho'$, where
$\rho'(\ga)= k_j\rho(\ga)k_i^{-1}$
for any path $\ga\in \Pi$
from $b_i$ to $b_j$.
Recall that $G$ is embedded %
in  $G\ltimes\Aut(G)$ and $H_i\subset G$. 
The following is now straightforward.
\begin{prop}
Taking monodromy yields a bijection 
$\wMB(\bSi,\cG)\cong \THom_\IS(\Pi,G)$ from 
the isomorphism classes of framed twisted Stokes local systems  to the twisted Stokes representations.
Moreover the  action of $\bH$ corresponds to changing the framings and the set of isomorphism classes of twisted 
Stokes local systems  corresponds bijectively to 
the set of $\bH$ orbits in $\THom_\IS(\Pi,G)$. 
\end{prop}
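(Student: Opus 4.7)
The plan is to proceed by the standard groupoid-representation correspondence between local systems and representations of the fundamental groupoid, but tracking three layers of twisting simultaneously: the twist of $\cG$ in the interior (encoded by $f$), the twist of the formal structure on each halo, and the Stokes multipliers at the tangential punctures. The forward map will be monodromy; the inverse will build a Stokes local system from a Stokes representation.

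First, for the forward direction, I would take a framed twisted Stokes local system $(\IL,\{\th_i\})$ and for each path $\ga:b_i\rightsquigarrow b_j$ in $\wt\Si$ compose the framings with parallel transport along $\ga$ to obtain a bijection $\rho(\ga):\cF_i\to\cF_j$. Since parallel transport is $\cG$-equivariant with the two fibres identified via $f(\ga)\in\Aut(G)$, this bijection lies in $G(\ga)\subset G\ltimes\Aut(G)$, and composition of paths reproduces the twisted multiplication law, so $\rho\in\THom(\Pi,G)$. The second Stokes condition $\rho(\wh\ga_d)\in \ISto_d$ is then immediate from the definition of a Stokes local system, after parallel translating along $\la_d$ to compare the Stokes group at $d$ with the copy of $G$ attached to $b_i$. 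The first condition $\rho(\partial_i)\in H(\partial_i)$ is the content of Lemma~\ref{lem: Htwist}, applied to the graded $\cG_i$-local system $\IL\bigl\vert_{\IH_i}$ and translated into the concrete picture via \eqref{eq: very concrete Mcondn} using the framing $\th_i$.

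For the inverse, given $\rho\in\THom_\IS(\Pi,G)$, I would construct $\IL_\rho$ on $\wt\Si$ by the usual procedure, e.g.\ as the associated bundle for $\rho$ on the universal cover of $\wt\Si$, or by constant clutching maps on a good open cover. Because $\rho(\ga)\in G(\ga)$, the resulting clutching data lands in the appropriate twists and produces a $\cG$-local system (not merely a $G\ltimes\Aut(G)$-local system). On each halo $\IH_i$, condition~(1) together with \S\ref{ssn: formal monod}, in particular the element $Q$ satisfying \eqref{eq: liftedQcondn}, produces a grading of $\IL_\rho\bigl\vert_{\IH_i}$ by $\cT_i$; equivalently, by Lemma~\ref{lem: gr vs redn}, a flat reduction to $\cH_i$. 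Condition~(2) then gives the required Stokes behaviour around each $\ga_d$, so $\IL_\rho$ is a Stokes local system, with canonical framings at $\be$ built in. The two constructions are inverse to each other essentially tautologically, once one checks independence of auxiliary choices (good cover, path representatives) via the twisted composition rule.

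Finally, replacing the framings $\th_i\mapsto \th_i\circ k_i^{-1}$ with $k_i\in H_i=\GrAut(\cF_i)$ changes the monodromy along $\ga:b_i\to b_j$ by $k_j\rho(\ga)k_i^{-1}$, which matches the $\bH$-action in the statement; conversely one is forced to act by \emph{graded} automorphisms in order to preserve the graded framing on each halo, so $H_i$ is exactly the right group. The main obstacle I anticipate is the translation of condition~(1) into $\rho(\partial_i)\in H(\partial_i)$: the formal monodromy naturally lives in an abstract twist of $H_i$, and one must identify it with the concrete subset $H(\partial_i)\subset G(\partial_i)$ in a framing-dependent way (cf.\ Remark~\ref{rmk: Hisos}). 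Everything else amounts to unwinding the twisted composition rule.
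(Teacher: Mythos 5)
The paper offers no written proof of this proposition — it is introduced with the words ``The following is now straightforward'' — and your proposal is a correct and careful fleshing-out of exactly the argument the authors intend: monodromy relative to the framings gives $\rho(\ga)\in G(\ga)$, condition (1) is Lemma~\ref{lem: Htwist} read through a framing via \eqref{eq: very concrete Mcondn} and Remark~\ref{rmk: Hisos}, condition (2) is built into the definition of a Stokes local system, the inverse builds $\IL_\rho$ by clutching and recovers the halo grading from \S\ref{ssn: formal monod}, and changing framings by $k_i\in H_i=\GrAut(\cF_i)$ gives precisely the $\bH$-action $\rho(\ga)\mapsto k_j\rho(\ga)k_i^{-1}$. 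No gap; this matches the paper's implicit argument.
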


\section{Twisted quasi-Hamiltonian geometry} \label{sn: tqH}
Our conventions for complex quasi-Hamiltonian geometry are as in \cite{gbs},
except here it is convenient to work with twisted quasi-Hamiltonian spaces.

Let $G$ be a connected complex reductive group with an automorphism $\phi\in \Aut(G)$.
Fix a symmetric non-degenerate complex bilinear form $(-,-)$ on $\g=\Lie(G)$, invariant
under the adjoint action and under $\phi$. 
Let $G(\phi)\subset \wt G=\wt G(\phi)\subset G\ltimes\Aut(G)$ be the twist of $G$ determined by $\phi$, on which $G$ acts by $\phi$-twisted conjugation.

\begin{defn}
A twisted quasi-Hamiltonian $G$-space (tq-Hamiltonian)
is a complex manifold $M$ with an action of $G$, an invariant 
holomorphic two-form $\omega$ and a $G$-equivariant map $\mu:M\to G(\phi)$, to a twist of $G$
(with the twisted conjugation action),
which satisfies the axioms of quasi-Hamiltonian $\wt G$-space for the action of 
$G\subset \wt G$. 
\end{defn}

Thus $M$ has a  twisted group valued moment map.
In brief such $M$ is almost a quasi-Hamiltonian $\wt G$-space (considered in 
\cite{ABM-purespinors}), 
but we only insist the identity component of $\wt G$ acts.
Alternatively it is a special type of quasi-Hamiltonian $\g$-space 
(also considered in \cite{ABM-purespinors})
where we insist the identity component of $\wt G$ does act.
Needless to say all the basic properties follow from those established in \cite{AMM, AKM, ABM-purespinors}.
In particular:

$\bullet$\ Any twisted conjugacy class $\cC\subset G(\phi)$ is a tq-Hamiltonian $G$ space.

$\bullet$\ If $\mu_i:M_i\to G(\phi_i)$ are tq-Hamiltonian moment maps for $i=1,2$, then the fusion
$M:=M_1\fus M_2$ is a tq-Hamiltonian $G$-space 
with moment map  $\mu_1\cdot\mu_2:M\to G(\phi_1\phi_2)$.

$\bullet$\ If $\phi_1$ is the inverse of $\phi_2$, then the gluing
$M_1\glue{} M_2 = M_1\fus{} M_2\spq G$ is well defined.

$\bullet$\ If $M$ is an affine variety and a tq-Hamiltonian $G$-space, then the affine quotient
$M/G$ is a Poisson variety, with symplectic leaves $M\spqa{\cC}G=\mu^{-1}(\cC)/G$, for twisted conjugacy classes $\cC\subset G(\phi)$.

As another example consider $H=G\times G$ with automorphism $\chi$ switching the two factors.
Then the group $G$ is isomorphic to the twisted conjugacy class in $H(\chi)$ containing 
$((1,1),\chi)$, and so is a tq-Hamiltonian $H$-space.
Then $G\fus G$ is a q-Hamiltonian $H$-space (since $\chi^2=1$), 
and is isomorphic to the double $\bD=\bD(G)$ of \cite{AMM} (which is the framed $G$-character variety $\wMB$ attached to an annulus). 
This derivation of the double from a conjugacy class is from \cite{AMW-DH}.
Similarly $G(\phi)$ is a tq-Hamiltonian $H$ space for any $\phi\in \Aut(G)$, where now
$\chi(g_1,g_2)=(\phi(g_2),\phi^{-1}(g_1))$.
Thus the ``twisted double'' $\bD(\phi,\psi):= G(\phi)\fus G(\psi^{-1})$
is also tq-Hamiltonian $H$-space. 
The moment map takes values in the twist $G(\phi\psi)\times G(\phi^{-1}\psi^{-1})$ of $H$,
and so we can fuse the two factors to obtain the ``twisted internally fused double''
$\ID(\phi,\psi)$ which is a tq-Hamiltonian $G$-space, whose moment map takes values in 
$G(\phi\psi\phi^{-1}\psi^{-1})$.
This is the framed twisted tame character variety $\wMB$ attached to a one holed torus, with arbitrary twist. %

Now return to the setup of \S\ref{sn: classn}.
Let $\bH(\bar\partial) = \Prod H(\bar\partial_i)\subset\Prod G(\bar\partial_i)$, 
which is a twist of $\bH$, and let 
$$\mu : \THom_\IS(\Pi,G) \to \bH(\bar\partial);
\qquad \rho \mapsto \{\rho(\bar\partial_i)\}$$
be the map taking
the (inverse of the) formal monodromies. 

Choose 
a locally constant, monodromy-invariant, 
non-degenerate invariant symmetric $\IC$-bilinear form on the Lie algebra of each fibre of 
$\cG\to \wt\Si$.
This determines an invariant bilinear form on 
$\Lie(\bH)$.

The main result then is as follows.

\begin{thm}\label{thm: main}
The space $\THom_\IS(\Pi,G)$ of twisted Stokes representations is a 
twisted quasi-Hamiltonian $\bH$-space with moment map $\mu$.
\end{thm}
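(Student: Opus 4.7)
The plan is to prove Theorem \ref{thm: main} by reducing, via fusion and gluing in the twisted quasi-Hamiltonian category, to a single base case that can be handled by a direct computation. The first step is to assemble the building blocks. Any twisted conjugacy class $\cC\subset G(\phi)$ is a tq-Hamiltonian $G$-space (listed just before the theorem), and, by \cite{AMM, AKM, ABM-purespinors}, fusion, gluing, and affine quotient all remain within the tq-Hamiltonian category once one is careful to track the automorphism components (the moment map of a fusion lies in $G(\phi_1\phi_2)$, etc.). The twisted internally fused double $\ID(\phi,\psi)$, already exhibited above, will play the role of the basic handle attached to the surface.

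Next I would dissect $\wt\Si$ along its cut system in the same way as in \cite{AMM, gbs, p12}. Attaching handles and extra punctures using fusion reduces the problem, inductively on the genus and on the number of boundary circles, to the case where $\bSi$ is a disc carrying a single irregular class $Q$. A second layer of induction, this time inside the halo and using fusion within the level decomposition \eqref{eq: level deomp}, reduces the depth of the irregular type one level at a time, exactly as in \cite{gbs} Section 7, except that now both the twist of $\cG$ and the twisted ramification of $Q$ must be tracked. The key observation that makes this induction close up is the one sketched in \S\ref{ssn: formal monod}: the monodromy of any graded $\cG$-local system on a circle lies in a twist $H(\partial)$ of $H=\GrAut(\cP)$, and this twist is isomorphic (as an $H$-bitorsor) to $H(\psi)$ for the monodromy automorphism $\psi$ of $\cH$, so the inductive step takes the same algebraic shape as in the untwisted setting.

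The base case is a single pole, one level, on the disc, with arbitrary twist of $\cG$ and arbitrary twisted irregular type. After passing to the finite cyclic cover $z=w^r$ (as in the discussion around \eqref{eq: liftedQcondn} and Appendix \ref{apx: untwisting}) the twisted irregular type becomes an ordinary irregular type $Q\in \lt\flp w \frp/\lt\flb w \frb$ of the form considered in \cite{gbs}, satisfying the equivariance \eqref{eq: liftedQcondn}. The space $\THom_\IS(\Pi,G)$ in this base case can then be written explicitly in terms of the Stokes groups $\ISto_d$, the twist $H(\partial)$, and a twisted ``formal monodromy'' factor, and I would define the two-form by the same formula used in \cite{gbs} (which in turn generalises \cite{saqh, fission}), now interpreted on the cover and then descended to the $\phi$-invariant part. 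The verification that this two-form satisfies the tq-Hamiltonian axioms (moment map condition, minimal degeneracy, and the two-form equation with respect to the $\bH$-action) then reduces to the untwisted verification on the cover, together with the $\Ga$-equivariance built into the construction.

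The main obstacle I anticipate is the base-case verification itself, specifically checking that the two-form descends cleanly from the cyclic cover and that the moment map really lands in the twist $\bH(\bar\partial)$ rather than in the ambient $\Prod G(\bar\partial_i)$. Concretely, one must show that the $\phi$-twisted conjugation action of $\bH$ on $\THom_\IS(\Pi,G)$ is compatible with the cyclic Galois action used in the untwisting, so that the axioms descend; once this compatibility is in place, smoothness and affineness of $\THom_\IS(\Pi,G)$ follow as in \cite{gbs} Thm 1.1, and the fusion/gluing induction assembles the general case from this base.
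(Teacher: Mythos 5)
Your overall architecture matches the paper's: build the tq-Hamiltonian category from conjugacy classes, fusion, gluing and twisted doubles; cut $\wt\Si$ down to a single disc by induction on genus and poles; do a second induction on levels inside the halo (Prop.~\ref{prop: 1level}); and reduce to a single-pole, single-level disc. The paper does exactly this, ending with $\THom_\IS(\Pi,G) \cong \cA(Q_1)\fus\cdots \fus\cA(Q_m)\fus \ID_1\fus\cdots \fus \ID_g\spq G$. Your remark that the twist $H(\partial)$ is isomorphic to $H(\psi)$ as a bitorsor (so the induction ``closes up'') is also the correct key point, stated in Remark~\ref{rmk: Hisos}.

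Where you diverge, and where the genuine gap lies, is the base-case verification. You propose to interpret the two-form on the cyclic cover $z=w^r$, invoke the untwisted verification of \cite{gbs} there, and then ``descend'' to the twisted space by $\Ga$-equivariance. But the twisted space $\cA(Q)$ on the $z$-disc is not a fixed-point set, a quotient, or an invariant subvariety of the untwisted space $\cA(Q')$ on the $w$-disc, so there is no descent argument to invoke. Concretely: the base space has $s$ Stokes factors and a formal monodromy in the twist $H(\partial)$, while the covering space has $rs$ Stokes factors and a formal monodromy in $H$ itself; going once around $\partial$ downstairs lifts only to a fractional arc upstairs, and going once around upstairs maps to $r$ turns downstairs. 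There is no natural map between the two quasi-Hamiltonian spaces that would let you transport the (QH3) minimal-degeneracy condition, and the moment-map targets $H(\partial)$ and $H$ are not even the same twist. The paper uses the cyclic cover only to establish the algebraic facts you need about the Stokes groups --- a period $2l$ with $\ISto_{j+2l}=\ISto_j$, and the fact that any $l$ consecutive Stokes groups directly span the unipotent radical of a parabolic with Levi $H$. Given those, it then verifies (QH3) \emph{directly} on the twisted space, by taking $v \in \Ker\omega\cap\Ker d\mu$, setting up the recursion $R_{i+1} = \Ad_{S_i}R_i + 2\sigma'_{i+1}$ (extended $\psi$-periodically to all $i\in\IZ$), and proving $R_i \in \lh\oplus\ls_i$ by an orthogonality argument with $\Lie(U_j)^\perp$, from which $\bar\gamma' = \sigma'_i = 0$ follows. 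You should replace the descent step with this direct tangent-space computation (or supply an honest functor relating $\cA(Q)$ to $\cA(Q')$, which the paper does not attempt); as stated your base case does not go through.
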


Let $\MB(\bSi,\cG)$ denote the affine variety associated to the ring of $\bH$ invariant functions on $\THom_\IS(\Pi,G)$.

\begin{cor}	
The twisted wild character variety $\MB(\bSi,\cG)$ is an algebraic Poisson variety.
The symplectic leaves of $\MB(\bSi,\cG)$, the symplectic twisted wild character varieties, are the multiplicative symplectic quotients
$$\MB(\bSi,\cG,\cC) = \THom_\IS(\Pi,G)\spqa{\cC}\bH
 = \mu^{-1}(\cC)/\bH$$
 for twisted conjugacy classes 
 $\cC\subset \bH(\bar\partial)$.
\end{cor}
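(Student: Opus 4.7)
The corollary is essentially formal once Theorem \ref{thm: main} is granted. My plan is first to set up the affine algebraic quotient, then to transfer the Poisson structure through it, and finally to identify the symplectic leaves via the standard tq-Hamiltonian reduction machinery.

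For the affine structure I note that $\bH=\prod H_i$ is a connected complex reductive group, being a finite product of such. By Theorem \ref{thm: main}, $\THom_\IS(\Pi,G)$ is a smooth affine variety carrying an algebraic action of $\bH$. Standard invariant theory for reductive group actions then ensures that the ring of $\bH$-invariants is a finitely generated $\IC$-algebra, so $\MB(\bSi,\cG)$ is well defined as an affine algebraic variety equipped with the GIT quotient structure.

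For the Poisson and symplectic content I would invoke the fourth bullet of the present section: once $M:=\THom_\IS(\Pi,G)$ is known to be a tq-Hamiltonian $\bH$-space with moment map $\mu:M\to\bH(\bar\partial)$, the affine quotient $M/\bH=\MB(\bSi,\cG)$ inherits a natural Poisson structure whose symplectic leaves are precisely the multiplicative symplectic quotients $\mu^{-1}(\cC)/\bH$, as $\cC$ ranges over twisted conjugacy classes in $\bH(\bar\partial)$. The general mechanism is the one of \cite{AMM, AKM, ABM-purespinors}, with the only modification being that the moment map lands in a twist of $\bH$, so that the centralisers relevant to the leaf construction are twisted centralisers and the leaves are labelled by twisted rather than ordinary conjugacy classes.

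The real obstacle is therefore Theorem \ref{thm: main} itself rather than the corollary. For that theorem the programme sketched after its statement is convincing: fusion handles the genus and pole number, a further fusion within each halo reduces the depth, and everything comes down to a single pole with a single level on a disc, where one can write the two-form explicitly. The delicate point I would worry about is verifying that the two-form formula inherited from \cite{gbs} still satisfies the tq-Hamiltonian axioms when \emph{both} kinds of twist (non-constant $\cG$ in the interior and non-constant $\IT_i$ in the halos) are switched on simultaneously, and that the fusion operations genuinely respect the twist structure---in particular that fusing along a boundary circle where the interior twist $\phi$ and the halo monodromy $\psi$ interact produces a moment map in the correctly twisted target, consistently with Remark \ref{rmk: Hisos}.
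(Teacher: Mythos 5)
Your proposal is correct and follows the same route the paper intends: the corollary is an immediate application of the fourth bullet in this section (affine tq-Hamiltonian quotients are Poisson with symplectic leaves $\mu^{-1}(\cC)/\bH$ for twisted conjugacy classes $\cC$) to the tq-Hamiltonian $\bH$-space structure on $\THom_\IS(\Pi,G)$ established in Theorem \ref{thm: main}, together with finite generation of invariants for the reductive group $\bH$. The paper gives no separate proof precisely because nothing more is needed, and your closing remarks correctly locate all the genuine work in Theorem \ref{thm: main} itself.
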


\newq{}
\subsection{Discs and pullbacks}\label{ssn: discs}
Now we will consider the simplest example of a disc with one marked point, and 
explain how to relate the Stokes data to the untwisted case.

Let $\Delta$ be a closed complex disc with a marked point $0$ in its interior.
Let $\cG\to \Delta\setminus 0$ be a local system of groups. 
Choose an irregular class $Q$ at $0$, and let 
$\wt \De\hookrightarrow \wh \De \onto \De$ be the associated surfaces, as in 
\eqref{eq: assoc.surface}.

Choose basepoints $b_0,b_1\in \partial\wh\De$  
(with $b_0$ lying over $0$), and standard fibres $\cF_0,\cF_1$.
To simplify notation let $\partial$ be the boundary 
circle of $\wt \De$ over $0$.
Choose framings of $\cG$ at $b_0,b_1$ and let $\phi\in\Aut(G)$ be the 
monodromy of $\cG$ around $\partial$.
Let $$\cA(Q)=\THom_\IS(\Pi,G)$$ 
denote the resulting space of Stokes representations.

Let $\IT_0\subset H\subset G=\Aut(\cF_0)$
denote the associated groups at $b_0$, 
let $\wt H\subset \wt G=\wt G(\phi)$ be the 
normaliser of $\IT_0$ in $\wt G$
and let 
$H(\partial)\subset G(\phi)$
denote the distinguished component of $\wt H$.
Let $\ISto_d\subset G=\Aut(\cF_0)$ denote the Stokes groups for each 
singular direction
$d\in \IA\subset \partial$ (translated to $b_0$ as above).
They are unipotent groups normalised by $H$.

\begin{figure}[ht]
	\resizebox{6cm}{!}{\includegraphics[width=5cm]{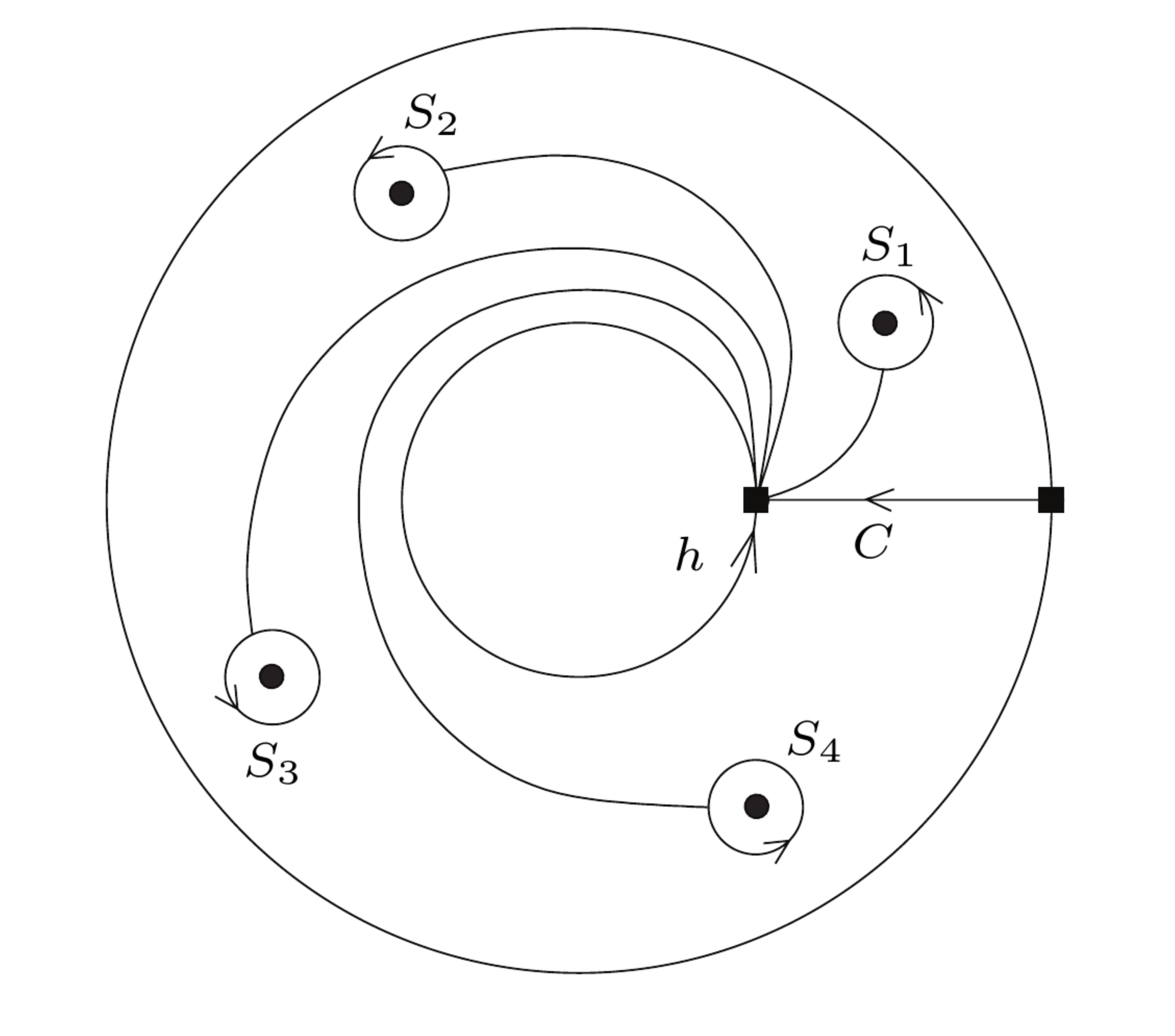}}
\end{figure}

Choosing paths generating $\Pi$ as in the figure above,
yields the following description:

\begin{lem}
There is an isomorphism
$\cA(Q) \cong G\times H(\partial) \times \prod_{d\in \IA} \ISto_d.$
In particular $\cA(Q)$ is a smooth affine variety.
\end{lem}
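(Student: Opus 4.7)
The plan is to reduce to the standard fact that the fundamental groupoid of a punctured annulus with two marked basepoints is a free groupoid, and then to observe that the Stokes and twist conditions cut out a product.

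First I would identify a convenient set of generators for $\Pi=\Pi_1(\wt\De,\{b_0,b_1\})$. The space $\wh\De$ is an annulus, and $\wt\De$ is obtained by removing the tangential punctures $\{e(d)\}_{d\in \IA}$, all of which lie in the halo. So $\wt\De$ is homotopy equivalent to a wedge of $1+\abs{\IA}$ circles, with the extra basepoint $b_1$ connected by an arc. Concretely, I would pick the generators suggested by the figure: the path $\eta$ from $b_1$ to $b_0$, the boundary loop $\partial$ at $b_0$, and the loops $\wh\ga_d$ at $b_0$ (for each $d\in\IA$), and argue that they freely generate $\Pi$ in the sense that specifying a morphism $\rho$ on these generators determines it uniquely on all of $\Pi$ with no further constraints. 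The only composition rule to keep in mind is the twisted one $\rho(\ga_1\circ\ga_2)=\rho(\ga_1)f(\ga_1)(\rho(\ga_2))$, but this is just the recipe for propagating $\rho$ along paths.

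Next I would evaluate the conditions defining $\THom_\IS(\Pi,G)\subset\THom(\Pi,G)$ on each generator independently. Since $\cG$ extends across the tangential puncture $e(d)$, $f(\wh\ga_d)$ is trivial and $\rho(\wh\ga_d)$ lives in $G$ itself; the Stokes condition then forces $\rho(\wh\ga_d)\in\ISto_d$. For $\partial$, the monodromy of $\cG$ along $\partial$ is $\phi$, so $\rho(\partial)\in G(\phi)$, and the requirement that the restriction of $\IL$ to the halo be graded in the class $Q$ is equivalent (by \S\ref{ssn: formal monod}, in particular Lemma \ref{lem: Htwist}) to $\rho(\partial)\in H(\partial)\subset G(\phi)$. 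Finally $\rho(\eta)$ is completely free in the twist $G(\eta)$. Because the generators freely generate $\Pi$ and the three conditions are imposed separately on the three kinds of generators, the evaluation map
\[
\rho\ \longmapsto\ \bigl(\rho(\eta),\,\rho(\partial),\,\{\rho(\wh\ga_d)\}_{d\in\IA}\bigr)
\]
yields a bijection with $G(\eta)\times H(\partial)\times\prod_{d\in\IA}\ISto_d$. Fixing the framings trivialises the $G$-bitorsor $G(\eta)$ as a variety isomorphic to $G$, giving the stated product decomposition.

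For the smooth affine claim, I would simply note that each factor is smooth affine: $G$ is connected complex reductive, $H(\partial)$ is a twist of the connected reductive group $H=\GrAut(\cF_0)$ and hence isomorphic (as a variety) to $H$, and each $\ISto_d$ is a connected unipotent algebraic group, so isomorphic to an affine space. A product of smooth affine varieties is smooth affine, so $\cA(Q)$ inherits this structure through the bijection, which is manifestly algebraic in the obvious parametrisations. The only subtle point—and the step I would spend most care on—is the free-generation of $\Pi$ and the verification that $f(\wh\ga_d)=\id$; once those are in place the rest is bookkeeping.
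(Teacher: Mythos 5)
Your proposal is correct and takes essentially the same route the paper does: the paper dispatches this lemma in one line ("Choosing paths generating $\Pi$ as in the figure above, yields the following description"), which is exactly the argument you spell out — free generators $\eta,\partial,\{\wh\ga_d\}$ of the groupoid, with the twisted-representation condition and the Stokes conditions 1), 2) constraining each generator independently. Your added checks (that $f(\wh\ga_d)=\id$ since $\cG$ extends across the tangential punctures, and that the framing choice along $\eta$ trivialises $G(\eta)\cong G$) are precisely the implicit steps the paper relies on.
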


Here we suppose the framings of $\cG$ are chosen so that $\cG$ has no monodromy along the chosen path from $b_0$ to $b_1$.
A point of $\cA(Q)$ will be denoted $(C,h,\bS)$ with $C\in G, h\in H(\partial)$ and 
$\bS\in \prod_{d\in \IA} \ISto_d$ where $\bS = (S_1,\ldots,S_{s})$ and
we label the $s=\#\IA$ points of $\IA$ in a positive sense from $b_0$. 
The action of $G\times H$ on $\cA(Q)$ is given by 
$$(g,k)(C,\bS,h)  = (kCg^{-1}, k\bS k^{-1}, khk^{-1})$$
where $(g,k)\in G\times H$ and $k\bS k^{-1} = (kS_1k^{-1},\ldots,kS_{s}k^{-1})$.
Define an algebraic two-form $\omega$ on $\cA(Q)$ by 
\beq  \label{eq: omeg def}
2 \omega = 
(\bar\ga,\Ad_b\bar\ga) 
+ (\bar\ga,\bar\be)
+ (\bar\ga_s, \ch) -
\sum_{i=1}^s(\ga_{i},\ga_{i-1})
\eeq
where 
$\ga_i = C_i^*(\th),\bar\ga_i = C_i^*(\bar\th),
\eta = h^*(\th_H),\bar\be = b^*(\bar\th)$,
where $\th, \bar \th$ are the Maurer--Cartan forms on $\wt G$ (and 
$\th_H, \bar \th_H$ are the Maurer--Cartan forms on $\wt H$),
where 
$C_i: \cA(Q) \to G$ is the map defined by
$C_i = S_i\cdots S_2S_1C$ and
 $b = hS_{s}\cdots S_2S_1: \cA(Q) \to G(\phi)$.
Note this expression for $\omega$ is the same as in \cite{gbs} eqn. (9).

\begin{thm}\label{thm: qh disc}
$\cA(Q)$ is a twisted quasi-Hamiltonian $G\times H$-space, with two-form $\omega$ and twisted group valued moment map
$$\mu(C,\bs,h) = (C^{-1} h S_{s}\cdots S_2 S_1 C,\,\, h^{-1}) \in 
G(\partial)\times H(\bar\partial).$$
\end{thm}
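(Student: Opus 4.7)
The strategy is a direct verification of the tq-Hamiltonian axioms for the stated $(\omega,\mu)$, exploiting the remarkable fact that formula (\ref{eq: omeg def}) is formally identical to the one proved to work in the untwisted case in \cite{gbs}. The bulk of the calculations therefore ports over, and what requires care is the arithmetic of the twists appearing in the target of $\mu$.

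First I would verify well-definedness and $G\times H$-equivariance of $\mu$. The product $b := hS_{s}\cdots S_1$ is the monodromy of $\IL$ around the inner boundary $\partial$ based at $b_0$, so by the Stokes condition and the definition of $H(\partial)$ it lies in $G(\partial)=G(\phi)$; conjugation by $C\in G$ gives $C^{-1}bC\in G(\partial)$, transporting the monodromy to $b_1$ (plain conjugation by elements of $G$ preserves the twist). The component $h^{-1}\in H(\bar\partial)$ is obvious. Equivariance is then immediate from the explicit action formula, since both $G$ and $H$ act by ordinary conjugation on the corresponding twists.

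Second, I would verify the moment-map axiom $\iota_{X_\xi}\omega=\tfrac{1}{2}(\mu^*(\theta+\bar\theta),\xi)$ and the 3-form axiom $d\omega=-\mu^*\chi$. Every term in (\ref{eq: omeg def}) is built from Maurer--Cartan forms on components of $\wt G$ and $\wt H$, all of which take values in the Lie algebras $\g$ and $\lh$ of the identity components; combined with the $\phi$-invariance of $(-,-)$ and the Maurer--Cartan structural equation, the differentiations reduce these two axioms to exactly the algebraic identities used in \cite{gbs}. No new phenomenon occurs here because the twist is invisible at the level of Lie algebra valued forms.

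The main obstacle, and the step that genuinely sees the twist, is the non-degeneracy/kernel axiom: one must show that $\ker\omega_m$ is precisely the span of the infinitesimal $(G\times H)$-actions by $\xi\in\g\oplus\lh$ satisfying $\mathrm{Ad}_{\mu(m)}\xi+\xi=0$, with $\mathrm{Ad}_{\mu(m)}$ the $\phi$-twisted adjoint action dictated by $\mu(m)\in G(\phi)\times H(\phi^{-1})$. The cleanest approach is to pull back to the finite cyclic cover $z=w^r$ via the untwisting construction of Apx.\ \ref{apx: untwisting}: on the cover the local system of groups becomes trivial and the irregular class becomes an untwisted irregular type in the sense of \cite{gbs}, so Thm 1.1 of \cite{gbs} applies directly, producing a quasi-Hamiltonian structure upstairs. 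One then shows, using $\phi$-invariance of the bilinear form, that this structure is equivariant for the cyclic Galois group $\IZ/r$ and descends to a tq-Hamiltonian structure on $\cA(Q)$; matching term by term with (\ref{eq: omeg def}) (using that the pulled-back Maurer--Cartan forms are the obvious ones) identifies the descended two-form with $\omega$ and the descended moment map with $\mu$. Smoothness and affineness of $\cA(Q)$ are immediate from the product description of the preceding lemma.
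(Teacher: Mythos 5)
Your handling of well-definedness, equivariance, and the axioms (QH1), (QH2) follows the same line as the paper, which also simply observes these reduce to the untwisted case since the two-form (\ref{eq: omeg def}) is formally identical to \cite{gbs} eqn.\ (9) and the twist is invisible at the level of Lie-algebra-valued Maurer--Cartan forms. The divergence is in (QH3), and there the proposal has genuine gaps.

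The paper does \emph{not} prove (QH3) by geometric descent from the cyclic cover. It only uses the pullback of Apx.\ \ref{apx: untwisting} to establish the combinatorial/Lie-theoretic facts about the Stokes groups (existence of the integer $l$ with the direct-spanning and opposite-parabolic properties), and then carries out the kernel computation entirely downstairs: it extends the Stokes groups and the quantities $R_i=\Delta_i+\sigma'_i$ to all $i\in\IZ$ via $\psi$-twisted periodicity $\ISto_{is+j}=\psi^{-i}(\ISto_j)$, $R_{is+j}=\Ad_h^{-i}R_j$, derives the recursion $R_{i+1}=\Ad_{S_i}R_i+2\sigma'_{i+1}$, and proves a lemma that $R_i\in\lh\oplus\ls_i$; vanishing of $\pi_\lh(R_i)$ then forces $R_i=\sigma'_i=0$ and $\bar\gamma'=0$.

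Your descent strategy faces two concrete obstructions that you do not address. First, the $r$-fold pullback of Apx.\ \ref{apx: untwisting} kills only the ramification twist (the monodromy of the lattice $I\subset\cI$); it does not kill the interior $\cG$-twist $\phi$, which may well have infinite order. So upstairs you would generally still have a \emph{twisted} problem and could not invoke \cite{gbs} Thm 1.1 directly --- which would make the argument circular, since that is precisely what this theorem is meant to establish. Second, even when $\phi$ does have finite order divisible into $r$, the relationship between $\cA(Q)$ and the untwisted space $\cA(Q')$ upstairs is not a quotient or a fixed-point locus in any obvious sense: $\IA'=\pi^{-1}(\IA)$ has $rs$ singular directions rather than $s$, the formal monodromy upstairs is an $r$-th power of the downstairs one, and the dimensions do not agree. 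You would need to construct an explicit map $\cA(Q)\to\cA(Q')$ (e.g.\ $S_i\mapsto(S_i,\psi^{-1}(S_i),\dots)$ with $h\mapsto h^r$ or similar), prove that it is compatible with the two quasi-Hamiltonian structures, and match the descended two-form with (\ref{eq: omeg def}) term by term --- none of which is sketched, and the bookkeeping is non-trivial precisely because the Maurer--Cartan pullbacks pick up extra terms from the extra Stokes factors. The paper's algebraic argument via the $R_i$ sequence avoids this entirely.
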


We will prove this as in \cite{gbs} by induction with respect to the number of levels.
Recall the decomposition \eqref{eq: level deomp} of the Stokes groups
in terms of the levels $k_1<\cdots <k_r$.

\begin{prop} \label{prop: 1level}
Suppose Thm. \ref{thm: qh disc}
holds in all the cases with only one level $k_1$.
Then Thm. \ref{thm: qh disc} holds in general.
\end{prop}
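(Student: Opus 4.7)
The plan is to argue by induction on the number of levels $r$, with the single-level case $r=1$ supplied by hypothesis. The geometric intuition is to split the halo into $r$ concentric sub-halos, each responsible for one level, thereby realising $\cA(Q)$ as an iterated fusion of single-level pieces.

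First I would set up a filtration of reductive subgroups. Let $\IT_0\subset G$ be the Ramis torus, and let $\IT_0(k)\subset \IT_0$ denote the sub-torus whose characters in $I_b\subset \cI_b$ have level $\le k$. Setting $H_0=G$ and $H_j=Z_G(\IT_0(k_j))$ yields a chain $G=H_0\supset H_1\supset\cdots\supset H_r=H$ of connected reductive subgroups (each normalised by the next $\ISto_d(k_{j+1})$ factors), and the level decomposition \eqref{eq: level deomp} gives $\ISto_d=\ISto_d(k_1)\times\cdots\times\ISto_d(k_r)$ at each singular direction. Correspondingly, split the irregular class as $Q=Q^{(1)}*\cdots*Q^{(r)}$, where $Q^{(j)}$ is the class supported at level $k_j$ for the group $H_{j-1}$, viewed as an irregular class on an annular sub-halo (it is single-level and has structure group $H_{j-1}$, reducing to $H_j$).

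Next I would identify $\cA(Q)$ with an iterated fusion. Concretely, using the path structure in the figure, the product $\bS=\bS^{(1)}\bS^{(2)}\cdots\bS^{(r)}$ factors uniquely through the level decomposition at each $d\in\IA$, and the monodromy $h\in H(\partial)$ of the innermost boundary lies in (the twist of) $H=H_r$. Reading off the pieces, one obtains a canonical isomorphism
\bal
\cA(Q) \;\cong\; \cA_{H_0}(Q^{(1)}) \;\fus\; \cA_{H_1}(Q^{(2)}) \;\fus\; \cdots\; \fus\; \cA_{H_{r-1}}(Q^{(r)}),
\eal
where $\cA_{H_{j-1}}(Q^{(j)})$ is the single-level Stokes space for the structure group $H_{j-1}$ on its annular sub-halo (each factor being tq-Hamiltonian for $H_{j-1}\times H_j$ by hypothesis, with twisted moment map taking the inner boundary monodromy). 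Fusion then needs to be performed along the chain $H_0\supset H_1\supset\cdots\supset H_r$ via the inclusions, producing a tq-Hamiltonian space for $H_0\times H_r=G\times H$ with moment map as claimed.

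The main work, and principal obstacle, is verifying that the two-form $\omega$ given by \eqref{eq: omeg def} agrees with the form produced by the iterated fusion. By the standard formula for fusion in (twisted) quasi-Hamiltonian geometry (from \cite{AMM, ABM-purespinors}), each fusion introduces a correction term $\tfrac12(\theta_1,\bar\theta_2)$ coupling the moment maps of the two factors. Using the factorisation $C_i=S_i\cdots S_1 C$ and the telescoping identity for the Maurer--Cartan pullbacks under $\bS=\bS^{(1)}\cdots\bS^{(r)}$, the cross terms $-(\ga_i,\ga_{i-1})$ in \eqref{eq: omeg def} split into the sum of the analogous intra-level cross terms for each $\cA_{H_{j-1}}(Q^{(j)})$ plus precisely the inter-level fusion correction terms. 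This computation is exactly parallel to the one in \cite{gbs}, the only new feature being bookkeeping of the automorphism $\phi$ via the twisted Maurer--Cartan forms on $\wt G$ and $\wt H_j$; invariance of $(-,-)$ under $\phi$ (assumed) ensures the cross-term identities go through unchanged.

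Finally I would check the moment map. At each stage the inner moment map of $\cA_{H_{j-1}}(Q^{(j)})$ takes values in a twist of $H_j$, and fusing along $H_j\hookrightarrow H_{j-1}$ multiplies moment maps. The ordered product $h\cdot S_{s}\cdots S_1 = h\cdot \bS^{(r)}\cdots\bS^{(1)}$, conjugated by $C$, is then the total monodromy around $\partial$, recovering the stated $\mu$ with values in $G(\partial)\times H(\bar\partial)$. Smoothness and the affine property pass through fusion, giving the conclusion of Theorem~\ref{thm: qh disc} in general once the single-level case is known.
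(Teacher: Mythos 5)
Your proposal is correct and follows essentially the same route as the paper's own proof, which simply defers to \cite{gbs} Prop.~7.12 and spells out the one genuinely new point: the chain of centralizer subgroups (there written $\cH_1=\cH\subset\cdots\subset\cH_r$, opposite indexing from yours) must be promoted to a chain of \emph{local systems} of groups, built from the level filtration $\cI^k\subset\cI$ of the exponent local system and the resulting tori $\IT^i\subset\Aut(\cP)$ --- a point you handle implicitly via the twisted Maurer--Cartan bookkeeping. One small imprecision: the combination along the chain $H_0\supset\cdots\supset H_r$ should be \emph{gluing} (fusion followed by reduction by the intermediate $H_j$), not bare fusion, so that the intermediate factors drop out and one lands on a tq-Hamiltonian $G\times H$-space rather than a $G\times H_1\times H_1\times\cdots\times H$-space.
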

\pf
This follows exactly as in \cite{gbs} Prop. 7.12.
The sequence of groups $H=H_1\subset \cdots H_r$ of \cite{gbs} (33)
is now the fibre at $b_0$ of a sequence $\cH=\cH_1\subset \cdots\subset  \cH_r$ 
of local systems of groups, which may be defined as follows.
For each $k\in\IQ$ let $\cI^{k}\subset \cI$ be the sublocal system (of lattices) of level $<k$.
Then the map $\cI\to \cI/\cI^{k}$ determines a sublocal system $\cT^k\subset \cT$ of pro-tori.
Thus if $\cP$ is graded by $\cT$, then it is graded by $\cT^k$ for any $k$.
Thus there is a sequence 
$$\IT^r\subset \cdots \subset\IT^2\subset  \IT^1=\IT$$
of local systems of tori,
with $\IT^i$ the image of $\cT^{k_i}$  in $\Aut(\cP)$.
Then $\cH_i$ is the centraliser in $\Aut(\cP)$ of $\IT^i$.
The story then proceeds as in \cite{gbs}, nesting/gluing the
 twisted quasi-Hamiltonian spaces together for each level.
\epf

\begin{prop}
Thm. \ref{thm: qh disc}
holds in all the cases with only one level $k_1$.
\end{prop}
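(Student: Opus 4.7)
The plan is to deduce this single-level twisted case from the corresponding single-level untwisted case of \cite{gbs} by untwisting via a cyclic cover. First I would choose $r \ge 1$ large enough that, after pulling back along $z = w^r$, the Ramis torus becomes constant and the local system $\cG$ of groups becomes trivial; such an $r$ exists because the Ramis torus has finite order monodromy and the outer automorphism data carried by $\cG$ is of finite order in the cases at hand (this is the untwisting of Appendix \ref{apx: untwisting}). On the $w$-disc the pulled back class $\tilde Q$ is an untwisted one-level irregular type of the kind studied in \cite{gbs}, and its centraliser is a larger connected complex reductive group $\tilde H \supset H$.

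Next, the untwisted one-level proposition of \cite{gbs} identifies the associated space $\tilde \cA(\tilde Q)$ of untwisted Stokes representations as a quasi-Hamiltonian $G \times \tilde H$-space, with two-form given literally by formula \eqref{eq: omeg def} and moment map of the stated shape. The Galois group $\Gamma \cong \IZ/r\IZ$ acts on $\tilde \cA(\tilde Q)$, and I would realise $\cA(Q)$ as the locus of $\Gamma$-equivariant points, where the $\Gamma$-action is twisted by $\phi$ on the $G$-factor and by the induced $\psi \in \Aut(H)$ of Remark \ref{rmk: Hisos} on the $H$-factor. Under this identification the two-form \eqref{eq: omeg def} on $\cA(Q)$ is the restriction of $\tilde \omega$, and the moment map is the restriction of $\tilde \mu$, but now landing in the twists $G(\partial) \times H(\bar \partial)$ cut out by the twisted equivariance.

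Since the invariant bilinear form on $\g$ is $\phi$-invariant by hypothesis, and $\Gamma$ acts by symplectomorphisms respecting the ambient quasi-Hamiltonian structure, axioms (QH1) and (QH2) for $\omega$ descend immediately from the cover. The main obstacle will be the kernel axiom (QH3): one must verify that $\ker \omega_x$ at a twisted representation is exactly the infinitesimal orbit of those $\xi \in \g \oplus \lh$ killed by the $\phi$-twisted analogue of $\Ad_{\mu(x)} + 1$ on $G(\partial) \times H(\bar \partial)$. Concretely, this means matching the twisted kernel with the $\Gamma$-invariant part of the untwisted kernel upstairs --- a linear-algebraic task whose delicacy comes from the interplay between $\phi$-twisted conjugation on the target and the ordinary adjoint action on the domain.

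As a complementary, more direct route, one may instead rerun the computation of the corresponding untwisted proposition of \cite{gbs} verbatim on the twisted space, using that the Maurer--Cartan forms on $\wt G$ restricted to a component such as $G(\phi)$ behave exactly as on $G$ (since $G(\phi)$ is a $G$-bitorsor, so its tangent space at any point is canonically a copy of $\g$). The only substantive modification is the bookkeeping of which factors of the product decomposition $\cA(Q) \cong G \times H(\partial) \times \prod_{d \in \IA} \ISto_d$ land in which twist; at each step the $\phi$-invariance of the bilinear form ensures that no extra terms appear when computing $d\omega$ or the contractions $\iota_X \omega$ from the Maurer--Cartan relations on $\wt G$.
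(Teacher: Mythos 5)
Your proposal offers two routes, and neither quite matches what the paper does — one has a genuine gap, and the other underestimates the main difficulty.

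Your Route 1 (realise $\cA(Q)$ as a $\Gamma$-equivariant locus inside the untwisted space $\tilde\cA(\tilde Q)$ on the cyclic cover, then descend the quasi-Hamiltonian structure) is not the paper's argument and has an unresolved problem precisely where you flag it. Fixed loci of finite group actions do not automatically inherit axiom (QH3): for a genuinely symplectic form one can use a $\Gamma$-invariant splitting of the tangent space, but for a quasi-Hamiltonian two-form the kernel is prescribed and nontrivial, and restricting to an invariant submanifold can enlarge it. Moreover the $\Gamma$-fixed theory would most naturally produce a quasi-Hamiltonian structure for the \emph{fixed} subgroup of $G\times \tilde H$, whereas what is needed is a \emph{twisted} structure for the full $G\times H$ with moment map landing in $G(\partial)\times H(\bar\partial)$. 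Bridging that gap is exactly the content of (QH3) in the twisted case, and your proposal leaves it open.

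Your Route 2 is closer in spirit to the paper, but describing the change as "only bookkeeping of which factors land in which twist" misses the substance. The paper does use the cyclic cover, but only to \emph{establish structural facts}: after pulling back it observes that the Stokes groups, extended to a $\IZ$-indexed family $\ISto_{is+j}:=\psi^{-i}(\ISto_j)$ with $\psi=P\phi(\cdot)P^{-1}$, are $2l$-periodic and that consecutive blocks of $l$ of them directly span opposite unipotent radicals with Levi $H$. It then runs the (QH3) verification directly on the twisted space. The heart is a nontrivial linear-algebra lemma — that the quantities $R_i=\Delta_i+\sigma_i'$ all lie in $\lh\oplus\ls_i$ — which is proved by comparing the recursion $R_{i+1}=\Ad_{S_i}R_i+2\sigma'_{i+1}$ against the orthogonality relations coming from the direct-spanning decomposition; from this one deduces $R_i\in\ls_i$, hence $R_i=\sigma_i'=0$ and $\bar\gamma'=0$. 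None of this is mere relabelling: the twist $\psi$ enters through the $\IZ$-indexing and the periodicity, and the lemma is genuinely new relative to the untwisted statement it generalises. In summary, the paper combines an idea from each of your routes — it uses the cover, but as a tool to see the combinatorics of the Stokes groups, not as a descent mechanism — and the real work in (QH3) is a computation your proposal does not engage with.
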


\pf
Let $\psi=P\phi(\cdot)P^{-1}\in \Aut(G)$, as in Rmk \ref{rmk: Hisos}.
Let $\ISto_{is+j} := \psi^{-i}(\ISto_j)$ for $i \in \IZ,\ j=1,2, \dots ,s$.
Then since $\IT$ has finite monodromy (given by $\psi$) we can pull 
back to a finite cyclic cover (as in Apx. \ref{apx: untwisting})
and observe (as in  \cite{gbs} proof of Lemma 7.11)
there exists $l \in \IZ_{>0}$ such that for any $j \in \IZ$:
\begin{enumerate}
\item $\ISto_{j+2l}=\ISto_j$;
\item the subgroups $\ISto_{j+1}, \dots , \ISto_{j+l}$
directly span the unipotent radical $U_j$ of a parabolic subgroup of $G$
with Levi subgroup $H$, and $\ISto_{j+l+1}, \dots , \ISto_{j+2l}$ directly span 
the unipotent radical of the opposite parabolic. 
\end{enumerate}
Then the Lie algebras $\ls_j := \sto_j=\Lie (\ISto_j)$ and $\lh := \Lie(H)$ satisfy
\[
(\ls_i,\ls_j)=0 \quad (\lvert i-j \rvert <l), 
\qquad \g = \lh \oplus \bigoplus_{m=j-l+1}^{j+l} \ls_m \quad (j \in \IZ),
\]
and hence $\ls_j$ and $\ls_{j+l}$ are dual to each other with respect to the pairing
for each $j \in \IZ$. Thus we have
\[
(\lh \oplus \ls_j )^\perp = \bigoplus_{m=j-l+1}^{j+l-1} \ls_m \quad (j \in \IZ).
\] 
The proof of (QH1), (QH2) is similar to the untwisted case (see \cite{gbs}).
For (QH3),
fix $p \in \mathcal{A}$ and $v \in \Ker \omega \cap \Ker d\mu \subset T_p \mathcal{A}$.
Then  for arbitrary $u \in T_p \mathcal{A}$ we have
\[
0=2\omega(v,u) = 2(\Ad_h^{-1} \bar{\gamma}',\dot{\eta}) + \sum_{i=1}^s (\Delta_i,\dot{\sigma}_i)
\]
using the notation from \cite{gbs} (noting that $h$ is in the group $\wt G$) where
\begin{equation}\label{eq:delta}
\Delta_i = \Ad_{h[si]}^{-1} \bar{\gamma}' + \Ad_{[i-1,1]}\bar{\gamma}'
-\sum_{j>i} \Ad_{[ji]}^{-1} \bar{\sigma}'_j + \sum_{j<i} \Ad_{[i-1,j]} \sigma'_j.
\end{equation}
We see that $\pi_{\lh}(\bar{\gamma}')=0$ 
and $\Delta_i$ is orthogonal to $\ls_i$.
Note that $d\mu(v)=0$ implies 
\[
\Ad_b^{-1} \bar{\gamma}' - \bar{\gamma}' = \beta' = \sum_{j=1}^s \Ad_{[j1]}^{-1}\bar{\sigma}'_j.
\]
Hence
\[
\Ad_{h[si]}^{-1} \bar{\gamma}' - \Ad_{[i-1,1]}\bar{\gamma}' = \sum_{j>i} \Ad_{[ji]}^{-1}\bar{\sigma}'_j + 
\sum_{j \leq i} \Ad_{[i-1,j]} \sigma'_j.
\]
By \eqref{eq:delta} and the above equality 
we see that $R_i := \Delta_i + \sigma'_i$, $i=1,2, \dots, s$ 
have the following expressions:
\begin{align}
R_i 
= 2 \left( \Ad_{[i-1,1]}\bar{\gamma}' + \sum_{j \leq i} \Ad_{[i-1,j]} \sigma'_j \right) 
= 2\left( \Ad_{h[si]}^{-1} \bar{\gamma}' - \sum_{j>i} \Ad_{[ji]}^{-1} \sigma'_j \right).
\end{align}
In particular, we have
$
R_1 = 2(\bar{\gamma}' + \sigma'_1), \  R_s = 2 \Ad_{h S_s}^{-1} \bar{\gamma}',  
$
and
\begin{equation}\label{eq:rec}
R_{i+1} = \Ad_{S_i}R_i + 2\sigma'_{i+1} \quad (i=1, \dots ,s-1).
\end{equation}
Extend the definition of $R_i$ to all $i \in \IZ$ by
$
R_{is+j} = \Ad_h^{-i} R_j$ for  $j=1,2, \dots ,s,\ i \in \IZ$. 
Also let $S_{is+j} = h^{-i} S_j h^i \in \ISto_{is+j}$.
Then we have
$$
R_0 = \Ad_h R_s = 2\Ad_{S_s} \bar{\gamma}', \qquad S_0 = h S_s h^{-1}, \qquad
R_{s+1} = \Ad_h^{-1} R_1 = 2\Ad_h^{-1}\bar{\gamma}' + 2 \sigma'_{s+1}.
$$
It follows that the recursion relation \eqref{eq:rec} holds for all $i \in \IZ$.

\begin{lem}
Each $R_i$ takes values in $\lh \oplus \ls_i$.
\end{lem}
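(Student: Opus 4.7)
I would prove the lemma by induction on $i$, using the recursion $R_{i+1} = \Ad_{S_i} R_i + 2\sigma'_{i+1}$ as the engine and the $h$-periodicity $R_{is+j} = \Ad_h^{-i} R_j$ to extend to all $i \in \IZ$. The two inputs already established above are (i) $\pi_\lh(\bar\gamma') = 0$ and (ii) $\Delta_i \perp \ls_i$ for every $i$. Observation (ii) immediately gives $R_i \perp \ls_i$, since $\sigma'_i \in \ls_i$ and $(\ls_i, \ls_i) = 0$; this already halves the work of pinning $R_i$ into $\lh \oplus \ls_i$, as it is equivalent to $\pi_{\ls_{i+l}}(R_i) = 0$.

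For the base case $i = 1$, one has $R_1 = 2(\bar\gamma' + \sigma'_1)$, so it suffices to show $\bar\gamma' \in \ls_1$. The hypothesis $\pi_\lh(\bar\gamma') = 0$ kills the $\lh$ component, and $\Delta_1 \perp \ls_1$ yields $\bar\gamma' \perp \ls_1$, i.e.\ $\pi_{\ls_{1+l}}(\bar\gamma') = 0$. The remaining components $\pi_{\ls_m}(\bar\gamma') = 0$ for $m \notin \{1, 1+l\}$ (within one period) would be extracted by applying the extended orthogonalities $\Delta_j \perp \ls_j$ for $j = 2, 3, \ldots, 2ls$, using the recursion to transport each constraint back to a root-space component of $\bar\gamma'$; this is the twisted analogue of the bootstrap in \cite{gbs}.

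For the inductive step, assuming $R_i \in \lh \oplus \ls_i$, I would expand $\Ad_{S_i} R_i$ via $\Ad_{S_i} = \exp(\ad_{X_i})$ with $X_i = \log S_i \in \ls_i$. Since $\lh$ normalises each $\ls_m$ one has $[\ls_i, \lh] \subset \ls_i$, and by the direct-span condition (1)--(2) of the opposite parabolic pair with common Levi $\lh$, the brackets $[\ls_i, \ls_i]$ (where non-zero) and all higher iterated commutators $\ad_{X_i}^k(\ls_i)$ lie in strictly higher graded pieces of the parabolic filtration. Thus $\Ad_{S_i}(\lh \oplus \ls_i) \subset \lh \oplus \ls_i \oplus (\text{higher graded pieces})$. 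Adding $2\sigma'_{i+1} \in \ls_{i+1}$ and invoking $R_{i+1} \perp \ls_{i+1}$ together with the orthogonalities $\Delta_{i+k} \perp \ls_{i+k}$ for the small range of $k$'s into which the spill can reach, one kills the ``higher'' components and concludes $R_{i+1} \in \lh \oplus \ls_{i+1}$.

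The main obstacle is precisely this last step: the nilpotent conjugation $\Ad_{S_i}$ can in principle scatter $\lh \oplus \ls_i$ across several graded pieces at once, and one has to verify that the finite list of orthogonality relations $\Delta_j \perp \ls_j$ is enough to kill each spilled component. In the twisted case there is the additional bookkeeping complication that the indexing $j \mapsto \ls_j$ carries the monodromy twist $\psi$. The cleanest way to handle both issues is to first pull back to the finite cyclic cover of Appendix~\ref{apx: untwisting}, where the twist is trivialised and the remaining computation reduces to the one carried out in \cite{gbs}, and then descend.
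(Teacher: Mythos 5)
Your proposal is framed as an induction on $i$, but the paper's argument is direct: for each fixed $i$ it shows $R_i \perp \Lie(U_j)$ for every $j$ with $i-l\le j\le i-1$, by expanding $\Lie(U_j)$ via the direct-span decomposition (the displayed formula for $\Lie(U_j)$ as a sum $\sum_m \Ad_{[i-1,m]}\ls_m + \ls_i + \sum_m \Ad_{[m-1,i]}^{-1}\ls_m$), using the recursion to relate $R_i$ to $R_m$ for $j<m\le j+l$, and invoking $R_m\perp\ls_m$ (which, as you correctly observe, follows from $\Delta_m\perp\ls_m$). Intersecting over $j$ then gives $R_i\in\bigcap_{j}\Lie(U_j)^\perp = \bigcap_{m=i-l+1}^{i+l-1}\ls_m^\perp = \lh\oplus\ls_i$. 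No induction is needed, and no appeal is made to $\pi_\lh(\bar\gamma')=0$ inside the lemma itself (that is used afterwards to deduce $R_i\in\ls_i$).

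The inductive scheme you sketch has two genuine gaps. First, the base case $i=1$ is not easier than the general case: establishing $\bar\gamma'\in\lh\oplus\ls_1$ already requires chaining the orthogonalities $\Delta_j\perp\ls_j$ through the recursion over a full window of length $l$, which is precisely the work of the direct proof; you label this ``the bootstrap'' but do not carry it out, and it is the entire content of the lemma. Second, your description of the inductive step is imprecise in a way that obscures the real difficulty: there is in fact \emph{no} spill into higher graded pieces, because $\lh\oplus\ls_i$ is a Lie subalgebra normalised by $S_i\in\ISto_i$ (indeed $[\lh,\ls_i]\subset\ls_i$ and $[\ls_i,\ls_i]\subset\ls_i$), so $\Ad_{S_i}(\lh\oplus\ls_i)\subset\lh\oplus\ls_i$ exactly. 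The actual problem is that $R_{i+1}=\Ad_{S_i}R_i+2\sigma'_{i+1}$ retains an $\ls_i$ component with no a priori reason to vanish; killing it requires looking $l$ steps ahead to $\Delta_{i+l}\perp\ls_{i+l}$ and transporting that constraint back through the recursion, which is again the paper's direct chaining argument. So the induction offers no shortcut: done carefully, the inductive step reproduces the direct proof.

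Finally, the suggestion to ``pull back to the cyclic cover to reduce to the untwisted case and then descend'' misplaces where that reduction happens. The proof of the one-level Proposition \emph{already} pulls back (invoking Appendix~\ref{apx: untwisting}) to establish the periodicity $\ISto_{j+2l}=\ISto_j$ and the direct-span/parabolic structure before the lemma is stated; the twist is then entirely absorbed into the $\IZ$-indexed families $\ISto_{is+j}$, $S_{is+j}$, $R_{is+j}$. Within that notation the lemma's proof is uniform over the twisted and untwisted cases, so there is no further ``descent'' step that would finish the argument for you.
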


\begin{proof}
Fix $i \in \IZ$ and take arbitrary $j \in \IZ$ with $j<i \leq j+l$.
Since the subgroups $\ISto_{j+1}, \ISto_{j+2}, \dots , \ISto_{j+l}$ 
directly span $U_j$, 
the product map
\[
\ISto_{j+l} \times \cdots \times \ISto_{j+1} \to U_j
\]
is an isomorphism of varieties. 
In particular, the differential of the above map at 
the point $(S_{j+l}, \dots , S_{j+1})$ is surjective, so we have
\begin{align*}
\Lie (U_j) &= (L_{[j+l,j+1]})_*^{-1}(T_{[j+l,j+1]} U_j) \\
&= (L_{[j+l,j+1]})_*^{-1} \sum_{m=j+1}^{j+l} (L_{[j+l,m+1]})_*(R_{[m-1,j+1]})_*(T_{S_m} \ISto_m) \\
&= \sum_{m=j+1}^{j+l} \Ad_{[m-1,j+1]}^{-1} \ls_m,
\end{align*}
where $L, R$ denotes the left and right translations, respectively.
Hence
\begin{equation}\label{eq:span}
\Lie (U_j) = \Ad_{[i-1,j+1]} \Lie (U_j) = \sum_{m=j+1}^{i-1}\Ad_{[i-1,m]}\ls_m 
+ \ls_i + \sum_{m=i+1}^{j+l} \Ad_{[m-1,i]}^{-1} \ls_m.
\end{equation}
On the other hand, using the recursion relation \eqref{eq:rec} repeatedly 
we obtain
\[
R_i = \Ad_{[i-1,m]} R_m + 2 \sum_{q=m+1}^i \Ad_{[i-1,q]} \sigma'_q 
\quad (m<i).
\]
If $m \geq j+1$, each $\Ad_{[i-1,q]} \sigma'_q$ appearing on the right hand side 
is orthogonal to $\Ad_{[i-1,m]} \ls_m$ thanks to \eqref{eq:span}. 
Since $R_m \in \ls_m^\perp$, we see that 
$R_i$ is orthogonal to $\Ad_{[i-1,m]} \ls_m$ for $j<m<i$.
Similarly, using the formula
\[
R_i = \Ad_{[m-1,i]}^{-1} R_m - 2 \sum_{q=i+1}^m \Ad_{[q-1,i]}^{-1} \sigma'_q 
\quad (m>i)
\]
deduced from \eqref{eq:rec}, 
we see that $R_i$ is orthogonal to $\Ad_{[m-1,i]}^{-1} \ls_m$ for $i<m \leq j+l$.
Thus we obtain
\[
R_i \in \left( \sum_{m=j+1}^{i-1}\Ad_{[i-1,m]}\ls_m 
+ \ls_i + \sum_{m=i+1}^{j+l} \Ad_{[m-1,i]}^{-1} \ls_m
\right)^\perp = \Lie (U_j)^\perp,
\] 
and hence
\[
R_i \in \bigcap_{j=i-l}^{i-1} \Lie (U_j)^\perp 
= \bigcap_{m=i-l+1}^{i+l-1} \ls_m^\perp = \lh \oplus \ls_i.
\]
\end{proof}
For $i>0$ we have
$
\pi_\lh(R_i) = \pi_\lh(\Ad_{S_{i-1}}R_{i-1}) = \pi_\lh(R_{i-1}) = \cdots =
\pi_\lh(R_1) = 2 \pi_\lh(\bar{\gamma}')=0,
$
and hence $R_i \in \ls_i$ for all $i \in \IZ$.
Then \eqref{eq:rec} implies
$R_{i+1} = 2 \sigma'_{i+1}, \  R_i =0 \  (i \in \IZ),$
since $\ls_i \cap \ls_{i+1}=0$.
This shows $R_i = \sigma'_i =0$ for all $i \in \IZ$ and $\bar{\gamma}' = 0$.
\epf

Thus Thm. \ref{thm: qh disc} holds.
In turn this implies the global result
Thm. \ref{thm: main} (i.e. Thm. \ref{thm: thm1} in the introduction)
as in the proof of \cite{gbs} Thm. 8.2 (but allowing twists).
In brief  %
$$\THom_\IS(\Pi,G) \cong 
\cA(Q_1)\fus\cdots \fus\cA(Q_m)\fus 
\ID_1\fus\cdots \fus \ID_g\spq G$$
where each $\ID_i$ is a twisted internally fused double, $g$ is the genus of $\wh \Si$ and 
$m$ is the number of boundary components of $\wh \Si$.
Note that many of the other results of \cite{gbs} now extend to the twisted case with little further effort.

The spaces $\cA(Q)$ with just one level will be called 
``twisted fission spaces''. %
As in \cite{fission} and \cite{gbs} \S3.2
they lead to many other Poisson or symplectic varieties beyond
the twisted wild character varieties.
We will call  the class of such varieties``twisted fission varieties''.
In general
one obtains spaces of twisted local systems on surfaces with varying structure groups in various components, and fission gives a way to splice 
surfaces in quite complicated ways 
(see the figures in \cite{fission} in the untwisted case).

Note that Li-Bland--Severa \cite{LBS15} recently suggested a different approach to some of the results of \cite{saqh, fission, gbs}, although not incorporating the new braiding.
We expect their approach also works in the twisted case.

\subsection{Example}

There are an abundance of examples; 
for example coming from any system of linear differential equations on any smooth algebraic curve. 
More specifically recall from Prop. \ref{prop: gln irclass}
that any map $\pi_0(\cI)\to \IN$ (mapping all but a finite number of components to zero) 
determines an irregular class 
for a general linear group with trivial twist $\phi$.
In particular for any positive $c\in \IQ$ 
we could consider the irregular class $Q$ determined by
$I=\langle z^{-c}\rangle\subset \cI$, 
with any multiplicity $n>0$.
Taking $c=k/2$ for any odd integer $k$ 
gives the following twisted quasi-Hamiltonian spaces.

Let $W$ be a complex vector space of dimension $n$ and let $V=W\oplus W$.
Thus elements of $G:=\GL(V)$ can be written as $2\times2$  block matrices, and we consider the following subgroups of $G$:
$$ 
U_+ = \bmx 1 & * \\ 0 & 1 \emx,\qquad 
U_- = \bmx 1 & 0 \\ * & 1 \emx,\qquad
H = \bmx * & 0 \\ 0 & * \emx
$$
and the subset 
$H(\partial) = \bsmx 0 & * \\ * & 0 \esmx\subset G$, which is a twist of $H$.
Let 
$$U_\pm^{(k)}  = U_+\times U_- \times U_+\times\cdots$$
where there are $k$ groups in total on the right.
A simple case of Theorem \ref{thm: thm1}
says that the space
$$\papk{G}{H}{c}  =\cA(Q) =  G\times H(\partial)\times U_\pm^{(k)}$$
is a twisted quasi-Hamiltonian 
$G\times H$ space with moment map
$$\mu=(\mu_G,\mu_H):{\papk{G}{H}{c}}\to G\times H(\partial);$$
$$
\mu_G(C,\bS,h) = C^{-1}hS_k\cdots S_2S_1C,\qquad \mu_H(C,\bS,h) = h^{-1},$$
where $C\in G, h\in H(\partial)$, and
$\bS=(S_1,\ldots,S_k)\in U_\pm^{(k)}$.

These examples are related to the matrix Painlev\'e 1 hierarchy, 
and will be studied in great detail in \cite{by-ccc}.
They are related to the odd Euler continuants,
similarly to how the Painlev\'e 2 hierarchy is 
 related to the even continuants \cite{even-euler}.

The reader can readily write down more examples.
In the examples $I=\langle z^{-c}\rangle$, with $n=1$, 
the corresponding underlying twisted wild character varieties
have been (conjecturally) related to the HOMFLY polynomial of the corresponding torus knot 
(whose projection is the Stokes diagram) in \cite{STZ}.

\appendix

\section{}\label{apx: untwisting}
To make the Stokes groups really explicit in the twisted case 
we will see they are the same as those occurring in the untwisted case, as follows.
In brief we can pull back to a ramified covering disc to reduce to the untwisted case 
(see also \cite{L-R94} \S II.4).
Suppose we are in the situation of \S\ref{ssn: discs}.
Choose an integer $r\ge 1$ and another disc $\De'$ with coordinate $w$, so that the map 
$$\pi : \De' \to \De;\qquad w\mapsto z=w^r$$
is well-defined. Let $\pi$ also denote the induced map 
$\wh \De'\to \wh \De$ on the blowups.
Let $\partial' = \pi^{-1}(\partial)$  and let 
$\wt\De' = \pi^{-1}(\wt \De)$.
Write $b=b_0$ and choose a basepoint $b'\in \partial'$ lying over $b$.
Then all the data at $b$ lifts to data at $b'$.
In particular each element $q$ of the character lattice 
$I_b=X^*(\IT_b)\subset \cI_b$
lifts uniquely to a local section $q'$ (near $b'$)
of the exponential local system $\cI'$ upstairs.
(In brief $q'(w)=q(w^r)$ as functions defined near the basepoints.)
Now choose $r$ so that for any $q\in I_b$ the lift 
$q'$ extends to a global section 
of $\cI'$ over $\partial'$. (For example take $r$ to be the lowest common multiple of the ramification degrees of a $\IZ$-basis of $I_b$.) 
Thus the local system $I\to \partial$ lifts to a {\em constant} lattice 
$I'\subset \cI'$.
Thus $I'$ lies in the unramified part
$\IC\flp w \frp/\IC\flb w \frb$ of $\cI'$.
Moreover we have an isomorphism $I'\cong I_b$ (of lattices).
Thus if we choose a maximal torus $T\subset G$ such that $\IT_b\subset T$, then we have a map
$$X^*(T) \onto I_b\into \IC\flp w \frp/\IC\flb w \frb.$$
Since $\lt^*=X^*(T)\otimes_\IZ \IC$, such a map is just an element
$Q=\sum_1^N {A_i}/{w^i}\in \lt\flp w \frp/\lt\flb w \frb$
where $A_i\in \lt=\Lie(T)$,  i.e. it is an untwisted irregular type, as studied in \cite{gbs}.
In particular the group $H\subset G$ here equals that in \cite{gbs}.
We can then use the (more concrete) definitions of 
singular directions $\IA'$ and Stokes groups 
from \cite{gbs}, and find:

1) $\IA'=\pi^{-1}(\IA)\subset \partial'$

2) If $d\in \IA'$ is on the first sheet of $\De'$ then the Stokes groups are equal:
$\ISto_{\pi(d)} = \ISto'_d\subset G,$
where  $\ISto'_d\subset G$ is the Stokes group at $d$ as defined in \cite{gbs}, and
$\ISto_{\pi(d)} \subset G$ is as defined here (translated to $G=\Aut(\cF_b)$
as in \S\ref{sn: classn}). 
Here the ``first sheet'' is the subset of $\De'$ with argument
$\arg(w)$ satisfying $\arg(b')\ \le\  \arg(w) \ <\  \arg(b')+2\pi/r.$

\renewcommand{\baselinestretch}{1}              %
\normalsize
\bibliographystyle{amsplain}    \label{biby}
\bibliography{../thesis/syr}

\ 

\noindent
D\'epartement de Math\'ematiques \\
B\^{a}timent 425 \\
Universit\'e Paris-Sud \\
91405 Orsay \\
France

\noindent
philip.boalch@math.u-psud.fr

\vspace{0.5cm} 
\noindent
Department of Mathematics \\ 
Tokyo Institute of Technology \\
Tokyo 152-8551 \\
Japan 

\noindent
yamakawa@math.titech.ac.jp

\end{document}